\documentclass[10pt,reqno]{amsart}

\usepackage[utf8]{inputenc}
\usepackage[english]{babel}
\usepackage{amsmath,amsfonts,amssymb,amsthm,shuffle}
\usepackage[T1]{fontenc}
\usepackage{lmodern}
\usepackage{mathtools}
\usepackage[titletoc]{appendix}

\usepackage[top=3.5cm,bottom=3.5cm,left=3.6cm,right=3.6cm]{geometry}

\usepackage[dvipsnames]{xcolor}
\usepackage[hyperindex=true,frenchlinks=true,colorlinks=true,
citecolor=Mahogany,linkcolor=DarkOrchid,urlcolor=Tan,linktocpage,
pagebackref=true]{hyperref}

\usepackage{tikz}
\usetikzlibrary{shapes}
\usetikzlibrary{fit}
\usetikzlibrary{decorations.pathmorphing}

\usepackage{dsfont}
\usepackage{wasysym}
\usepackage{stmaryrd}
\usepackage{cite}
\usepackage{subfigure}
\usepackage{multirow}
\usepackage{enumitem}
\usepackage{multicol}
\usepackage{bm}

\linespread{1.15}

\title[Convergence rates for weighted 
sums of Bernoulli random fields]{Convergence rates in the central limit theorem for weighted 
sums of Bernoulli random fields}
\keywords{random fields;  moment 
inequalities; central limit theorem.}
\date{\today}
\author{Davide Giraudo}
\address{Ruhr-Universität Bochum
Fakultät für Mathematik
NA 3/32
Universitätsstraße 150
44780 Bochum‚ Germany.}
\email{Davide.Giraudo@ruhr-uni-bochum.de}

\numberwithin{equation}{subsection}
\setcounter{tocdepth}{2}
\renewcommand{\leq}{\leqslant}
\renewcommand{\geq}{\geqslant}

\newtheorem{Theorem}{Theorem}[section]
\newtheorem{Th\'eor\`eme}{Th\'eor\`eme}[section]

\newtheorem{Lemma}[Theorem]{Lemma}

\newtheorem{Definition}[Theorem]{Definition}
\newtheorem{D\'efinition}[Th\'eor\`eme]{D\'efinition}
\newtheorem{Corollary}[Theorem]{Corollary}

\theoremstyle{remark}
\newtheorem{Remark}[Theorem]{Remark}
\newtheorem{Example}[Theorem]{Example}

\tikzstyle{Vertex}=[circle,draw=LimeGreen!80,fill=LimeGreen!8,
inner sep=1pt,minimum size=2mm,line width=1pt,font=\scriptsize]
\tikzstyle{Node}=[Vertex,draw=RoyalBlue!80,fill=RoyalBlue!8,inner sep=1.5pt]
\tikzstyle{Leaf}=[rectangle,draw=Black!70,fill=Black!16,
inner sep=0pt,minimum size=1mm,line width=1.25pt]
\tikzstyle{Edge}=[Maroon!80,cap=round,line width=1pt]
\tikzstyle{Mark1}=[draw=BrickRed!80,fill=BrickRed!8]
\tikzstyle{Mark2}=[draw=BurntOrange!80,fill=BurntOrange!8]
\tikzstyle{EdgeRew}=[->,RedOrange!80,cap=round,thick]


\newcommand \ens[1]{\left\{ #1\right\}}
\newcommand \R{\mathbb R}

\newcommand \N{\mathbb N}

\newcommand{\E}[1]{\mathbb E\left[#1\right]}

\newcommand \Z{\mathbb Z}

\newcommand \abs[1]{\left|#1\right|}
\newcommand \eps{\varepsilon}

\newcommand{\f}{\mathcal F}

\newcommand{\pr}[1]{\left(#1\right)}
\newcommand{\norm}[1]{\left\lVert #1 \right\rVert}
\newcommand{\gr}[1]{\bm{#1}}
\newcommand{\imd}{\preccurlyeq}

\begin{document}

\begin{abstract}
 We prove moment inequalities for a class of functionals 
 of i.i.d. random fields. We then derive rates in the central limit theorem 
 for weighted sums of such randoms fields via an approximation 
 by $m$-dependent random fields. 
\end{abstract}
\maketitle 

\section{Introduction and main results}
  
\subsection{Goal of the paper}  

In its simplest form, the central limit theorem states that if $\pr{X_i}_{i\geq 1}$ 
is an independent identically distributed (i.i.d.) sequence of centered random variables 
having variance one, then the sequence $\pr{n^{-1/2}\sum_{i=1}^nX_i}_{n\geq 1}$ 
converges in distribution to a standard normal random variable. If $X_1$ has 
a finite moment of order three, Berry \cite{MR0003498} and Esseen 
\cite{MR0011909} gave the following convergence rate:
\begin{equation}
\sup_{t\in\R}\abs{\mathbb P\ens{ n^{-1/2}\sum_{i=1}^nX_i\leq t}-
 \mathbb P\ens{N\leq t} }\leq C\E{\abs{X_1}^3}n^{-1/2},
\end{equation}
where $C$ is a numerical constant and $N$ has a standard normal 
distribution. The question of extending the 
previous result to a larger class of sequence have received a lot 
of attention. When $X_i$ can be represented as a function of 
an i.i.d. sequence, optimal convergence rates are given in 
\cite{MR3502600}.

In this paper, we will focus on random fields, that is, collection 
of random variables indexed by $\Z^d$ and more precisely in
 Bernoulli random fields, which are defined as follows. 
 
 \begin{Definition}
  Let $d\geq 1$ be an integer. The random field 
  $\pr{X_{\gr{n}}}_{\gr{n}\in \Z^d}$ is said to be Bernoulli if 
  there exist an i.i.d. random field $\pr{\eps_{\gr{i}}}_{\gr{i}\in \Z^d}$ 
  and a measurable function $f\colon \R^{\Z^d}\to \R$ such that 
  $X_{\gr{n}}=f\pr{\pr{\eps_{\gr{n}-\gr{i}}}_{\gr{i}\in \Z^d}}$ 
  for each $\gr{n}\in \Z^d$.
 \end{Definition}
  We are interested in the asymptotic behavior of the sequence 
$\pr{S_n}_{n\geq 1}$ defined by 
\begin{equation} 
 S_n:=\sum_{\gr{i}\in\Z^d}b_{n,\gr{i}}X_{\gr{i}},
\end{equation}
where 
$b_n:=\pr{b_{n,\gr{i}}}_{\gr{i}\in \Z^d}$ is an element of $\ell^2\pr{\Z^d}$. Under 
appropriated conditions on the dependence of the random field $\pr{X_{\gr{i}}}_{\gr{i}\in 
\Z^d}$ 
and the sequence of weights $\pr{b_n}_{n\geq 1}$ that will be 
specified later, the sequence $\pr{S_n/\norm{b_n}_2}_{n\geq 1}$ converges 
in law to a normal distribution \cite{MR3483738}. The goal of this paper is to 
provide bounds of the type Berry-Esseen in order to give convergence 
rates in the central limit theorem.    

This type of question has been addressed for the so-called $\operatorname{BL}
\pr{\theta}$-dependent random fields \cite{MR2325308}, martingale 
differences random fields \cite{MR2167826}, positively and negatively dependent 
random fields \cite{MR1661688,MR1355526} and mixing random fields 
\cite{MR2310650,MR1082637}.

In order to establish this kind of results, we need several ingredients. First, we need 
convergence rates for $m$-dependent random fields. Second, a Bernoulli 
random field can be decomposed as the sum of an $m$-dependent random field 
and a remainder. The control of the contribution of the remainder is done 
by a moment inequality in the spirit of Rosenthal's inequality \cite{MR0271721}. 
 One of the main applications of such an inequality is the estimate of the 
 convergence rates in the central limit theorem for random fields that can be expressed 
 as a functional of a random field consisting of i.i.d. random variables. The method 
 consists in approximating the considered random 
 field by an $m$-dependent one, and in controlling the approximation with the help of  
 the established moment inequality.  In the one dimensional case, 
probability and moment inequalities have been established in 
\cite{MR3114713} for 
maxima of partial sums of Bernoulli sequences. The techniques 
used therein permit to derive results for weighted sums of 
such sequences.

The paper is organized as follows. In Subsections~\ref{subsec:background}, we give the material 
which is necessary to understand the moment inequality stated in 
Theorem~\ref{thm:moment_inequality_dependence_coefficients}. We then give 
the results on convergence rates 
in Subsection~\ref{subsec:mian_results} (for weighted sums, sums on 
subsets of $\Z^d$ and in a regression model)  and compare  the obtained results 
in the case of linear random fields with some existing ones. Section~\ref{sec:proofs} 
is devoted to the proofs.

\subsection{Backgroud}\label{subsec:background}

   The following version of Rosenthal's inequality  is
   due to Johnson, Schechtman and Zinn \cite{MR770640}: 
   if 
   $\pr{Y_i}_{i=1}^n$ are independent centered random variables with 
   a finite moment of order $p\geq 2$, then 
   
   \begin{equation}\label{eq:Rosenthal}
    \norm{\sum_{i=1}^nY_i}_p\leqslant \frac{14.5p}{\log p}\left(\left(
    \sum_{i=1}^n\norm{Y_i}_2^2\right)^{1/2}+
    \left(
    \sum_{i=1}^n\norm{Y_i}_p^p\right)^{1/p}\right),
   \end{equation}
   where $\norm{Y}_q:=\pr{\E{\abs{Y}^q  }}^{1/q}$ for $q\geq 1$.

   It was first estalish without explicit constant in Theorem~3 of \cite{MR0271721}.

  Various extension of Rosenthal-type inequalities have been 
  obtained under mixing conditions \cite{MR1334179,MR2117923} or 
  projective conditions \cite{MR2255301,MR2472010,MR3077530}.  
 We are interested by extensions of \eqref{eq:Rosenthal} to the 
 setting of dependent random fields.  
 
Throughout the paper, we shall use the following notations.
 
 \begin{enumerate}[label=(N.\arabic*)]
  \item For a positive integer $d$, the set $\ens{1,\dots,d}$ is denoted by $[d]$.
  \item The coordinatewise order is denoted by $\imd$, that is, 
   for $\gr{i}=\pr{i_q}_{q=1}^d\in \Z^d$ and 
  $\gr{j}=\pr{j_q}_{q=1}^d\in \Z^d$,
  $\gr{i}
  \imd \gr{j}$ means that $i_k\leq j_k$ for any $k\in [d]$. 
  \item For $k\in [d]$, $\gr{e_k}$ denotes the element of $\Z^d$ 
  whose $q$th coordinate is $1$ and all the others are zero. Moreover, we write 
  $\gr{0}=(0,\dots,0)$ and $\gr{1}=(1,\dots,1)$.
  \item For $\gr{n}=\pr{ n_k}_{k=1}^d\in \N^d$, we write the product $\prod_{k=1}^dn_q$ as 
  $\abs{\gr{n}}$.
  \item The cardinality of a set $I$ is denoted by $\abs{I}$.
  \item For a real number $x$, we denote by $\left[x\right]$ the 
  unique integer such that $[x]\leq x<[x]+1$.
  \item\label{not:standard_normal} We write $\Phi$ for the cumulative distribution function 
  of a standard normal law.
  \item\label{not:ensemble_translate} If $\Lambda$ is a 
  subset of $\Z^d$ and $\gr{k}\in \Z^d$, then $\Lambda-\gr{k}$ is 
  defined as $\ens{\gr{l}-\gr{k},\gr{l}\in \Lambda}$.
  \item For $q\geq 1$, we denote by $\ell^{q}\pr{\Z^d}$ the space of 
  sequences $\gr{a}:=\pr{a_{\gr{i}}}_{\gr{i}\in \Z^d}$ such that 
  $\norm{\gr{a}}_{\ell^q}:=\pr{\sum_{ \gr{i}\in \Z^d} \abs{a_{\gr{i}}}^q   }^{1/q}
  <+\infty$.
  \item For $\gr{i}=\pr{i_q}_{q=1}^d$, the quantity 
  $\norm{\gr{i}}_\infty$ is defined as $\max_{1\leq q\leq d}
  \abs{i_q}$.
 \end{enumerate}
 Let $\pr{Y_{\gr{i}}}_{\gr{i}\in \Z^d}$ be a random field.
 The sum $\sum_{\gr{i}\in \Z^d}Y_{\gr{i}}$ is understood as the 
 $\mathbb L^1$-limit of the sequence $\pr{S_k}_{k\geq 1}$ where 
 $S_k=\sum_{\gr{i}\in \Z^d,\norm{\gr{i}}_{\infty} \leq k}Y_{\gr{i}}$.

 Following \cite{MR2172215} we define 
 the physical dependence measure.
\begin{Definition}
 Let $\pr{X_{\gr{i}}}_{\gr{i}\in\Z^d}:=
\pr{f\pr{\pr{\eps_{\gr{i}-\gr{j}}} }_{\gr{j}\in\Z^d}  }_{\gr{i}\in \Z^d}$ be a Bernoulli random field, 
$p\geq 1$ and 
$\pr{\eps'_{\gr{u}}}_{\gr{u}
\in \Z^d}$ be an i.i.d. random field which is independent 
of the i.i.d. random field $\pr{\eps_{\gr{u}}}_{\gr{u}\in \Z^d}$ and
has the same distribution 
as $\pr{\eps_{\gr{u}}}_{\gr{u}\in \Z^d}$. For $\gr{i}\in \Z^d$, 
we introduce the physical dependence measure
\begin{equation}
 \delta_{\gr{i},p} :=\norm{X_{\gr{i}}-X^*_{\gr{i}}}_p
\end{equation}
where $X_{\gr{i}}^*=f\pr{  \pr{\eps^*_{\gr{i}-\gr{j}}}_{\gr{j}\in\Z^d}  }$ and 
$\eps^*_{\gr{u}}=\eps_{\gr{u}}$ if $\gr{u}\neq \gr{0}$, 
$\eps^*_{\gr{0}}=\eps'_{\gr{0}}$.
\end{Definition}

In \cite{MR2988107,MR3256190}, various examples of Bernoulli 
random fields are given, for which the physical dependence measure 
is either computed or estimated. Proposition~1 of \cite{MR2988107}
also gives the following moment inequality: if $\Gamma$ is 
a finite subset of $\Z^d$, $\pr{a_{\gr{i}}}_{\gr{i}\in\Gamma}$ 
is a family of real numbers and $p\geq 2$, then for any Bernoulli 
random field $\pr{X_{\gr{n}}}_{\gr{n}\in\Z^d}$, 
\begin{equation}\label{eq:inegalite_EM_V_W}
 \norm{\sum_{\gr{i}\in \Gamma}a_{\gr{i}}X_{\gr{i}}}_p
 \leq \pr{2p\sum_{\gr{i}\in\Gamma}a_{\gr{i}}^2}^{1/2}
 \cdot \sum_{\gr{j}\in \Z^d}\delta_{\gr{j},p}.
\end{equation}
This was used in \cite{MR2988107,MR3256190} in order to establish 
functional central limit theorems. Truquet \cite{MR2684016} 
also obtained an inequality in this spirit.
 If $\pr{X_{\gr{i}}}_{\gr{i}\in \Z^d}$ 
is i.i.d. and centered, \eqref{eq:Rosenthal} would give 
\begin{equation}
 \norm{\sum_{\gr{i}\in \Gamma}a_{\gr{i}}X_{\gr{i}}}_p
 \leq C\pr{ \sum_{\gr{i}\in\Gamma}a_{\gr{i}}^2}^{1/2}
 \norm{X_{\gr{1}}}_p,
\end{equation}
while Rosenthal's inequality \eqref{eq:Rosenthal} 
would give
\begin{equation}
 \norm{\sum_{\gr{i}\in \Gamma}a_{\gr{i}}X_{\gr{i}}}_p
 \leq C\pr{ \sum_{\gr{i}\in\Gamma}a_{\gr{i}}^2}^{1/2}
 \norm{X_{\gr{1}}}_2+
 C\pr{ \sum_{\gr{i}\in\Gamma}\abs{a_{\gr{i}}}^p}^{1/p}
 \norm{X_{\gr{1}}}_p,
\end{equation}
 a better result in this context.

In the case of linear processes, equality $\delta_{\gr{j},p}\leq 
K\delta_{\gr{j},2}$ holds for a constant $K$ which does not 
depend on $\gr{j}$. However, there are processes for which such an 
inequality does not hold. 

\begin{Example}
We give an example of a random field such that there is no constant $K$ such that 
$\delta_{\gr{j},p}\leq 
K\delta_{\gr{j},2}$ holds for all $\gr{j}\in\Z^d$.
 Let $p\geq 2$ and let $\pr{\eps_{\gr{i}}}_{\gr{i}\in \Z^d}$ be an i.i.d. random field 
 and for each $\gr{k}\in \Z^d$, let $f_{\gr{k}}\colon \R\to \R$ be 
 a function such that the random variable $Z_{\gr{k}}:=f_{\gr{k}}\pr{\eps_{\gr{0}}}$ 
 is centered and has a finite moment of order $p$, and 
 $\sum_{\gr{k}\in \Z^d}\norm{Z_{\gr{k}}}_2^2<+\infty$.  
 Define $X_{\gr{n}}:=\lim_{N\to+\infty}
 \sum_{-N\gr{1}\imd \gr{j}\imd N\gr{1}}f_{\gr{k}}\pr{\eps_{\gr{n}-\gr{k}}}
 $, where the limit is taken in $\mathbb L^2$. Then 
 $X_{\gr{i}}-X^*_{\gr{i}}=f_{\gr{i}}\pr{\eps_{\gr{0}}}-
 f_{\gr{i}}\pr{\eps'_{\gr{0}}}$ hence $\delta_{\gr{i},2}$ is of order 
 $\norm{Z_{\gr{i}}}_2$ while $\delta_{\gr{i},p}$ is of order
 $\norm{Z_{\gr{i}}}_p$.
\end{Example}
Consequently, having the $\ell^p$-norm instead of the $\ell^2$-norm 
of the $\pr{a_{\gr{i}}}_{\gr{i}\in \Gamma}$ is more suitable. 

  \subsection{Mains results}\label{subsec:mian_results}
  
We now give a Rosenthal-like inequality for weighted sums of 
Bernoulli random fields in terms 
of the physical dependence measure.

\begin{Theorem}\label{thm:moment_inequality_dependence_coefficients}
  Let $\ens{\eps_{\gr{i}},\gr{i}\in\Z^d}$ be an i.i.d. set of 
  random variables. Then for any measurable function $f\colon \R^{\Z^d}\to \R$
  such that $X_{\gr{j}}:=f\pr{\pr{X_{\gr{j}-\gr{i}}}_{\gr{i}\in \Z^d}}$ has a finite 
  moment of order $p\geq 2$ and is centered, and any $\pr{a_{\gr{i}}}_{\gr{i}\in 
  \Z^d}\in \ell^2\pr{\Z^d}$, 
  \begin{multline}\label{eq:main_result_moment_inequality_PDM}
   \norm{\sum_{\gr{i}\in \Z^d} a_{\gr{i}}X_{\gr{i}}}_p
   \leq 
  \frac{14.5p}{\log p}\pr{\sum_{\gr{i}\in \Z^d}a_{\gr{i}}^2}^{1/2}
   \sum_{j=0}^{+\infty}\pr{4j+4 }^{d/2}\norm{X_{\gr{0},j}}_{2}\\
   +  \frac{14.5p}{\log p} 
   \pr{\sum_{\gr{i}\in \Z^d}\abs{a_{\gr{i}}}^p}^{1/p}
   \sum_{j=0}^{+\infty}\pr{4j+4 }^{d\pr{1-1/p}}\norm{X_{\gr{0},j}}_{p},
  \end{multline}
 where for $j\geq 1$,
 \begin{equation}
 X_{\gr{0},j}=\E{X_{\gr{0}}\mid \sigma\ens{\eps_{\gr{u}},\norm{\gr{u}}_\infty\leq 
 j   }}-\E{X_{\gr{0}}\mid \sigma\ens{\eps_{\gr{u}},\norm{\gr{u}}_\infty\leq 
 j -1  }} 
 \end{equation}
 and $ X_{\gr{0},0}=\E{X_{\gr{0}}\mid \sigma\ens{\eps_{\gr{0}}}}$.
\end{Theorem}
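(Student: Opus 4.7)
The plan is to mimic the classical martingale--Rosenthal strategy adapted to the $\Z^d$ setting: first split each $X_{\gr{i}}$ into an orthogonal sum of martingale increments $X_{\gr{i},j}$ indexed by the box radius $j$, then for each fixed $j$ reduce to an i.i.d.\ situation by a colouring argument, apply the scalar Rosenthal inequality \eqref{eq:Rosenthal} of Johnson--Schechtman--Zinn, and finally recombine via Cauchy--Schwarz and H\"older.

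First, for each $\gr{i}\in\Z^d$ introduce the filtration $\Fca_{\gr{i},j}=\sigma\{\eps_{\gr{u}}:\norm{\gr{u}-\gr{i}}_\infty\leq j\}$ and define $X_{\gr{i},j}=\E{X_{\gr{i}}\mid\Fca_{\gr{i},j}}-\E{X_{\gr{i}}\mid\Fca_{\gr{i},j-1}}$ for $j\geq 1$, with $X_{\gr{i},0}=\E{X_{\gr{i}}\mid\Fca_{\gr{i},0}}$. By stationarity $\|X_{\gr{i},j}\|_q=\|X_{\gr{0},j}\|_q$ for $q\in\{2,p\}$, and by L\'evy's convergence theorem applied to the centered variable $X_{\gr{i}}$, the telescoping series $X_{\gr{i}}=\sum_{j\geq 0}X_{\gr{i},j}$ converges in $\mathbb L^p$. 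Minkowski's inequality then yields
\[
\norm{\sum_{\gr{i}\in\Z^d}a_{\gr{i}}X_{\gr{i}}}_p\leq\sum_{j=0}^{+\infty}\norm{\sum_{\gr{i}\in\Z^d}a_{\gr{i}}X_{\gr{i},j}}_p,
\]
so it suffices to control each inner norm.

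For a fixed $j$, the variable $X_{\gr{i},j}$ is measurable with respect to the innovations in the cube of side $2j+1$ centred at $\gr{i}$. Choose $M_j$ a bit larger than $2j+1$ (e.g.\ $M_j=2j+2$, where the looser constant $4j+4$ in the statement can absorb this choice) and partition $\Z^d$ into the $M_j^d$ residue classes modulo $M_j$ coordinate-wise; within one such class $C$ the cubes are pairwise disjoint, so $(X_{\gr{i},j})_{\gr{i}\in C}$ is an i.i.d.\ (hence in particular independent, centred) family. Applying \eqref{eq:Rosenthal} to $(a_{\gr{i}}X_{\gr{i},j})_{\gr{i}\in C}$ and using stationarity gives
\[
\norm{\sum_{\gr{i}\in C}a_{\gr{i}}X_{\gr{i},j}}_p\leq\frac{14.5p}{\log p}\left[\left(\sum_{\gr{i}\in C}a_{\gr{i}}^2\right)^{1/2}\norm{X_{\gr{0},j}}_2+\left(\sum_{\gr{i}\in C}\abs{a_{\gr{i}}}^p\right)^{1/p}\norm{X_{\gr{0},j}}_p\right].
\]
Summing over the $M_j^d$ classes and invoking Cauchy--Schwarz on the $\ell^2$-sums and H\"older on the $\ell^p$-sums transforms the combinatorial prefactor into $M_j^{d/2}$ and $M_j^{d(1-1/p)}$ respectively, producing the factors $(4j+4)^{d/2}$ and $(4j+4)^{d(1-1/p)}$ appearing in the statement.

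Summing the resulting inequality over $j\geq 0$ delivers \eqref{eq:main_result_moment_inequality_PDM}. The main obstacle I expect is the bookkeeping in the colouring/Minkowski--Cauchy step: one must verify carefully that the periodic partition really produces independent blocks for all $j$ (including $j=0$) and that the exponents of $M_j$ coming out of Cauchy--Schwarz/H\"older are exactly $d/2$ and $d(1-1/p)$, since this is what makes the second term in Rosenthal's inequality interact nicely with the $\ell^p$ rather than $\ell^2$ norm of $(a_{\gr{i}})$. The $\mathbb L^p$-convergence of the telescoping decomposition and the justification that the sums in \eqref{eq:main_result_moment_inequality_PDM} can be taken over all of $\Z^d$ (by first working on finite sub-cubes and passing to the limit) are routine but should be mentioned.
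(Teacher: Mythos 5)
Your proof is correct and follows the same overall strategy as the paper: the identical martingale decomposition $X_{\gr{i}}=\sum_{j\geq 0}X_{\gr{i},j}$, Minkowski's inequality in $j$, a reduction to independent families by exploiting the spatial separation of the generating cubes, and the Johnson--Schechtman--Zinn form of Rosenthal's inequality \eqref{eq:Rosenthal}. The only real difference is the combinatorial step for fixed $j$: you colour individual lattice points by their residues modulo $2j+2$, so that each colour class is an i.i.d.\ centered family to which \eqref{eq:Rosenthal} applies directly, and you then sum over the $(2j+2)^d$ classes by Cauchy--Schwarz and H\"older; the paper instead groups points into blocks of side $2j+2$, splits the blocks into $2^d$ parity classes, applies Rosenthal to the independent block sums and uses Jensen's inequality inside each block (treating the $j=0$ term separately as a genuinely i.i.d.\ sum). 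Both bookkeeping schemes are valid, and yours in fact produces the slightly sharper factors $(2j+2)^{d/2}$ and $(2j+2)^{d(1-1/p)}$ in place of $(4j+4)^{d/2}$ and $(4j+4)^{d(1-1/p)}$, so \eqref{eq:main_result_moment_inequality_PDM} follows a fortiori. The points you flag as routine --- the $\mathbb L^p$-convergence of the telescoping series via the martingale convergence theorem, and passing from finite index sets to all of $\Z^d$ --- are treated in the same (brief) way in the paper.
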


We can formulate a version of inequality 
\eqref{eq:main_result_moment_inequality_PDM} where the right hand side is expressed in 
terms of the coefficients of physical dependence measure. The obtained result is not directly 
comparable to \eqref{eq:inegalite_EM_V_W} because of the presence of the $\ell^p$-norm 
of the coefficients.

\begin{Corollary}\label{cor:moment_inequality_Wu}
Let $\ens{\eps_{\gr{i}},\gr{i}\in\Z^d}$ be an i.i.d. set of 
  random variables. Then for any measurable function $f\colon \R^{\Z^d}\to \R$
  such that $X_{\gr{j}}:=f\pr{\pr{X_{\gr{j}-\gr{i}}}_{\gr{i}\in \Z^d}}$ has a finite 
  moment of order $p\geq 2$ and is centered, and any $\pr{a_{\gr{i}}}_{\gr{i}\in 
  \Z^d}\in \ell^2\pr{\Z^d}$, 
  \begin{multline}\label{eq:main_result_moment_inequality_Wu_coeff}
   \norm{\sum_{\gr{i}\in \Z^d} a_{\gr{i}}X_{\gr{i}}}_p
   \leq \sqrt 2
  \frac{14.5p}{\log p}\pr{\sum_{\gr{i}\in \Z^d}a_{\gr{i}}^2}^{1/2}
   \sum_{j=0}^{+\infty}\pr{4j+4 }^{d/2}\pr{\sum_{\norm{\gr{i}}_\infty=j}
   \delta_{\gr{i},2}^2}^{1/2}\\
   +  \sqrt 2\frac{14.5p}{\log p} \sqrt{p-1}
   \pr{\sum_{\gr{i}\in \Z^d}\abs{a_{\gr{i}}}^p}^{1/p}
   \sum_{j=0}^{+\infty}\pr{4j+4 }^{d\pr{1-1/p}}\pr{\sum_{\norm{\gr{i}}_\infty=j}
   \delta_{\gr{i},p}^2}^{1/2}.
  \end{multline}
\end{Corollary}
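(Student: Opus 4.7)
The plan is to derive Corollary~\ref{cor:moment_inequality_Wu} directly from Theorem~\ref{thm:moment_inequality_dependence_coefficients} by replacing each quantity $\norm{X_{\gr{0},j}}_r$ for $r\in\ens{2,p}$ on the right-hand side of \eqref{eq:main_result_moment_inequality_PDM} with a bound involving the physical dependence coefficients $\delta_{\gr{i},r}$ restricted to the shell $\ens{\gr{i}\in\Z^d:\norm{\gr{i}}_\infty=j}$. The target intermediate estimate is
\begin{equation*}
\norm{X_{\gr{0},j}}_r \leq \sqrt{2(r-1)}\pr{\sum_{\norm{\gr{i}}_\infty=j}\delta_{\gr{i},r}^{2}}^{1/2},
\qquad r\in\ens{2,p},
\end{equation*}
and substituting this for $r=2$ and $r=p$ into \eqref{eq:main_result_moment_inequality_PDM} immediately produces \eqref{eq:main_result_moment_inequality_Wu_coeff}, with the announced prefactors $\sqrt 2$ and $\sqrt{2(p-1)}=\sqrt 2\sqrt{p-1}$.

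To prove the intermediate estimate I would fix an enumeration $\gr{i}_1,\dots,\gr{i}_{N_j}$ of the shell of radius $j$ and introduce the filtration $\Gca_0=\sigma\ens{\eps_{\gr{u}}:\norm{\gr{u}}_\infty\leq j-1}$ and $\Gca_k=\Gca_{k-1}\vee\sigma(\eps_{\gr{i}_k})$ for $k=1,\dots,N_j$, so that $\Gca_{N_j}=\sigma\ens{\eps_{\gr{u}}:\norm{\gr{u}}_\infty\leq j}$. Telescoping yields $X_{\gr{0},j}=\sum_{k=1}^{N_j}D_k$ with $D_k:=\E{X_{\gr{0}}\mid\Gca_k}-\E{X_{\gr{0}}\mid\Gca_{k-1}}$, which is a martingale difference sequence adapted to $(\Gca_k)$. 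Each $D_k$ is then compared to a coupling: on an enlarged probability space carrying independent copies $\eps'_{\gr{i}_k}$ of the $\eps_{\gr{i}_k}$, define $X_{\gr{0}}^{(\gr{i}_k)}$ by replacing $\eps_{\gr{i}_k}$ with $\eps'_{\gr{i}_k}$ in $X_{\gr{0}}$. Since $\eps'_{\gr{i}_k}$ is independent of both $X_{\gr{0}}$ and $\Gca_k$ and has the same distribution as $\eps_{\gr{i}_k}$, one checks that $\E{X_{\gr{0}}^{(\gr{i}_k)}\mid\Gca_k\vee\sigma(\eps'_{\gr{i}_k})}=\E{X_{\gr{0}}\mid\Gca_{k-1}}$. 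Conditional Jensen together with the shift-invariance of the i.i.d.\ field then gives $\norm{D_k}_r\leq\norm{X_{\gr{0}}-X_{\gr{0}}^{(\gr{i}_k)}}_r=\delta_{-\gr{i}_k,r}$.

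The sum $\sum_k D_k$ is finally controlled by a Burkholder-type inequality for real-valued martingale differences in $\mathbb L^r$, in the form $\norm{\sum_k D_k}_r^2\leq 2(r-1)\sum_k\norm{D_k}_r^2$, which gives the desired constant $\sqrt{2(r-1)}$ once combined with the symmetry $\sum_{k}\delta_{-\gr{i}_k,r}^{2}=\sum_{\norm{\gr{i}}_\infty=j}\delta_{\gr{i},r}^{2}$ coming from the fact that $\gr{i}\mapsto-\gr{i}$ preserves the shell. The main delicate point is the coupling identity $D_k=\E{X_{\gr{0}}-X_{\gr{0}}^{(\gr{i}_k)}\mid\Gca_k\vee\sigma(\eps'_{\gr{i}_k})}$, which has to be set up on a suitably enriched probability space supporting the whole family $(\eps'_{\gr{i}_k})_{k}$ of independent replacements; the rest of the argument is then an essentially mechanical assembly of this bound with Theorem~\ref{thm:moment_inequality_dependence_coefficients}.
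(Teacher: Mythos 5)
Your route is essentially the paper's own. The corollary is deduced there from Theorem~\ref{thm:moment_inequality_dependence_coefficients} via exactly the intermediate bound you state (Lemma~\ref{lem:norm_Xoj_delta}), and the lemma is proved the same way you propose: enumerate the shell $\ens{\gr{i}:\norm{\gr{i}}_\infty=j}$, telescope $X_{\gr{0},j}$ into martingale differences along the filtration obtained by adjoining one $\eps$ at a time, apply an $\mathbb L^r$ Burkholder-type inequality for martingale differences (the paper uses Theorem~2.1 of \cite{MR2472010}, with constant $r-1$), and bound each increment by physical dependence coefficients. The only substantive difference is in the last step: the paper bounds the $s$-th increment by $\delta_{\gr{v_s},r}+\delta_{\gr{v_{s-1}},r}$ (following Wu, which is why it needs the adjacency condition on the enumeration), whereas your coupling yields the single term $\delta_{-\gr{i}_k,r}$ and needs no ordering of the shell; you then invoke the weaker constant $2(r-1)$, which is legitimate (it follows from Rio's $r-1$) and reproduces the stated prefactors.

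One step is misstated, although the idea is right and the repair is one line. The identity $\E{X_{\gr{0}}^{(\gr{i}_k)}\mid\Gca_k\vee\sigma(\eps'_{\gr{i}_k})}=\E{X_{\gr{0}}\mid\Gca_{k-1}}$ cannot hold: conditioning on $\eps'_{\gr{i}_k}$ retains the dependence on it, so the left-hand side is a nondegenerate function of $\eps'_{\gr{i}_k}$ (it is the same kernel as $\E{X_{\gr{0}}\mid\Gca_k}$ evaluated at $\eps'_{\gr{i}_k}$ in place of $\eps_{\gr{i}_k}$), while the right-hand side is $\Gca_{k-1}$-measurable; accordingly $\E{X_{\gr{0}}-X_{\gr{0}}^{(\gr{i}_k)}\mid\Gca_k\vee\sigma(\eps'_{\gr{i}_k})}$ is not $D_k$. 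The correct version conditions on $\Gca_k$ alone: since $\eps'_{\gr{i}_k}$ is independent of $\pr{\Gca_k,\pr{\eps_{\gr{u}}}_{\gr{u}\neq\gr{i}_k}}$ and has the law of $\eps_{\gr{i}_k}$, one gets $\E{X_{\gr{0}}^{(\gr{i}_k)}\mid\Gca_k}=\E{X_{\gr{0}}\mid\Gca_{k-1}}$, hence $D_k=\E{X_{\gr{0}}-X_{\gr{0}}^{(\gr{i}_k)}\mid\Gca_k}$, and conditional Jensen plus stationarity gives $\norm{D_k}_r\leq\norm{X_{\gr{0}}-X_{\gr{0}}^{(\gr{i}_k)}}_r=\delta_{-\gr{i}_k,r}$, which is what your argument actually uses. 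With that correction, and the shell symmetry under $\gr{i}\mapsto-\gr{i}$ that you already note, the proof is complete.
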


Let $\pr{X_{\gr{j}}}_{\gr{j}\in\Z^d}=
f\pr{\pr{\eps_{\gr{j}-\gr{i}}}_{\gr{i}\in\Z^d}}$ be a centered square 
integrable Bernoulli 
random field and for any positive integer $n$, let $
b_n:=\pr{b_{n,\gr{i}}}_{\gr{i}\in \Z^d}$ be an element of $\ell^2\pr{\Z^d}$.
We are interested in the asymptotic behavior of the sequence 
$\pr{S_n}_{n\geq 1}$ defined by 
\begin{equation}\label{eq:definition_de_Sn}
 S_n:=\sum_{\gr{i}\in\Z^d}b_{n,\gr{i}}X_{\gr{i}}.
\end{equation}
Let us denote for $\gr{k}\in \Z^d$ 
the map $\tau_{\gr{k}}\colon\ell^2\pr{\Z^d}\to \ell^2\pr{\Z^d}$ 
defined by $\tau_{\gr{k}}\pr{\pr{x_{\gr{i}}}_{\gr{i}\in \Z^d}}
:=\pr{x_{\gr{i}+\gr{k}}}_{\gr{i}\in \Z^d}$.

In \cite{MR3483738}, Corollary~2.6 gives the following result: under a
Hannan type condition on the random field $\pr{X_{\gr{i}}}_{\gr{i}\in \Z^d}$ and under the 
following condition on the weights:
for any $q\in [d]$, 
\begin{equation}\label{eq:condition_sur_b_n}
 \frac 1{\norm{b_n}_{\ell^2}}\norm{\tau_{\gr{e_q}}\pr{b_n}-b_n   }_{\ell^2}=0,
\end{equation}
the series $\sum_{\gr{i}\in \Z^d}\abs{\operatorname{Cov}
\pr{X_{\gr{0}},X_{\gr{i}}}}$ converges and with
\begin{equation}\label{eq:definition_de_sigma}
 \sigma:= \pr{
\sum_{\gr{i}\in \Z^d}\operatorname{Cov}
\pr{X_{\gr{0}},X_{\gr{i}}} }^{1/2}, 
\end{equation}
the sequence $\pr{S_n/\norm{b_n}_{\ell^2}}_{n\geq 1}$ converges 
in distribution to a centered normal distribution with variance 
$\sigma^2$. The argument relies on an approximation by an 
$m$-dependent random field. 

The purpose of the next theorem is to give a general speed of convergence. In order 
to measure it, we define 
\begin{equation}
 \Delta_n:=
 \sup_{t\in \R}\abs{\mathbb P\ens{\frac{S_n}{\norm{b_n}_{\ell^2}    }\leq t}
 -\Phi\pr{t/\sigma}  }.
\end{equation}
The following quantity will also play an important role in the 
estimation of convergence rates.
\begin{equation}\label{eq:definition_de_eps_n}
 \eps_n:=\sum_{\gr{j}\in \Z^d}\abs{\E{X_{\gr{0}}X_{\gr{j}}}}
 \sum_{\gr{i}\in\Z^d}\abs{\frac{b_{n,\gr{i}}b_{n,\gr{i}+\gr{j}}}{\norm{b_n}_{\ell^2}
 }-1}.
\end{equation}
\begin{Theorem}\label{thm:vitesse_de_convergence_sommes_ponderees}
 Let $p>2$, $p':=\min\ens{p,3}$ and let $\pr{X_{\gr{j}}}_{\gr{j}\in\Z^d}=
\pr{f\pr{\pr{\eps_{\gr{j}-\gr{i}}}_{\gr{i}\in\Z^d}}}_{\gr{j}\in\Z^d}$ be a centered  Bernoulli 
random field with a finite moment of order $p$ and for any positive integer $n$, let $
b_n:=\pr{b_{n,\gr{i}}}_{\gr{i}\in \Z^d}$ be an element of $\ell^2\pr{\Z^d}$ 
such that for any $n\geq 1$, the set $\ens{\gr{k}\in \Z^d, 
b_{n,\gr{k}}\neq 0}$ is finite and nonempty, $\lim_{n\to +\infty}
\norm{b_n}_{\ell^2}=+\infty$
 and \eqref{eq:condition_sur_b_n} holds 
for any $q\in [d]$. 
Assume that for some positive $\alpha$ and $\beta$, the following 
series are convergent: 
\begin{equation}\label{eq:definition_de_C2_et_Cp}
C_2\pr{\alpha}:=\sum_{i=0}^{+\infty}\pr{i+1}^{d/2+\alpha}
 \norm{X_{\gr{0},i}}_2\mbox{ and }C_p\pr{\beta}:=\sum_{i=0}^{+\infty}
 \pr{i+1}^{d\pr{1-1/p}+\beta}
 \norm{X_{\gr{0},i}}_p.
\end{equation}

Let $S_n$ be defined by \eqref{eq:definition_de_Sn},

Assume that $\sum_{\gr{i}\in \Z^d}\abs{\operatorname{Cov}
\pr{X_{\gr{0}},X_{\gr{i}}}}$ is finite and that  $\sigma$ be given by 
\eqref{eq:definition_de_sigma} is positive.
Let $\gamma>0$ and let 
\begin{equation}\label{eq:definition_de_n0}
 n_0:=\inf\ens{N\geq 1\mid \forall n\geq N,
 \sqrt{\sigma^2+\eps_n}-29\pr{\log 2}^{-1}
  C_2\pr{\alpha} \pr{\left[\norm{b_n}_{\ell^2}\right]^{\gamma}}^{-\alpha} 
  \geq \sigma/2   }.
\end{equation}
Then for each $n\geq n_0$, 
\begin{multline}\label{eq:Berry_Esseen_weighted_sums}
 \widetilde{\Delta_n}\leq 150\pr{29\pr{\left[\norm{b_n}_{\ell^2}\right]+21}^\gamma+21}^{\pr{p'-1}d}
 \norm{X_{\gr{0}}}_{p'}^{p'}
 \pr{\frac{\norm{b_n}_{\ell^{p'}} }{\norm{b_n}_{\ell^{2}} } }^{p'}
 \pr{\sigma/2  }^{-p'}\\+
 \pr{2\frac{\abs{\eps_n}}{\sigma^2}+80\pr{\log 2}^{-1}
 \frac{ \norm{b_n}_{\ell^2} ^{-\gamma\alpha}}{\sigma^2}C_2\pr{\alpha}^2}  
 \pr{2\pi e}^{-1/2}\\
 +\pr{ \frac{14.5p}{\sigma\log p} 4^{d/2}\norm{b_n}_{\ell^2}^{-\gamma\alpha}
 C_2\pr{\alpha}   }^{\frac{p}{p+1}}+
   \pr{\frac{\norm{b_n}_{\ell^p}}{\sigma \norm{b_n}_{\ell^2}   }\frac{14.5p}{\log p} 
   4^{d\pr{1-1/p}} \norm{b_n}_{\ell^2}^{-\gamma\beta}C_p\pr{\beta}
   }^{\frac{p}{p+1}}.
\end{multline}
In particular, there exists a constant $\kappa$ such that for all $n\geq n_0$, 
\begin{equation}\label{eq:Berry_Esseen_weighted_sums_sans_cst}
 \Delta_n\leq \kappa \pr{\norm{b_n}_{\ell^2}^{\gamma\pr{p'-1}d-p'}
  \norm{b_n}_{\ell^{p'}}^{p'} +
 \abs{\eps_n} 
 + \norm{b_n}_{\ell^2}^{-\gamma\alpha\frac{p}{p+1}}
 +\norm{b_n}_{\ell^p}^{\frac{p}{p+1}}\norm{b_n}_{\ell^2}^{-\frac{p}{p+1}
 \pr{\gamma\beta+1}}}.
\end{equation}
\end{Theorem}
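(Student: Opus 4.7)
The plan is to combine an $m$-dependent approximation of the Bernoulli random field with the moment inequality of Theorem~\ref{thm:moment_inequality_dependence_coefficients} and a Berry-Esseen type result for $m$-dependent random fields. I would set $m_n:=\left[\norm{b_n}_{\ell^2}\right]^{\gamma}$ and define
\[
\widetilde X_{\gr{i}}:=\E{X_{\gr{i}}\mid \sigma\ens{\eps_{\gr{u}},\norm{\gr{u}-\gr{i}}_\infty\leq m_n}}=\sum_{j=0}^{m_n}X_{\gr{i},j},
\]
which yields a $(2m_n+1)$-dependent random field. Then decompose $S_n=\widetilde S_n+R_n$ with $\widetilde S_n:=\sum_{\gr{i}}b_{n,\gr{i}}\widetilde X_{\gr{i}}$ and $R_n:=\sum_{\gr{i}}b_{n,\gr{i}}\sum_{j>m_n}X_{\gr{i},j}$. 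The choice of $m_n$ realizes the usual bias–rate trade-off: larger $m_n$ shrinks $R_n$ but inflates the Berry-Esseen rate for $\widetilde S_n$.

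To control $R_n$, I would apply Theorem~\ref{thm:moment_inequality_dependence_coefficients} to the random field $\sum_{j>m_n}X_{\gr{\cdot},j}$. Since the two sums on the right of \eqref{eq:main_result_moment_inequality_PDM} become tails of the convergent series defining $C_2(\alpha)$ and $C_p(\beta)$, they are bounded by $m_n^{-\alpha}C_2(\alpha)$ and $m_n^{-\beta}C_p(\beta)$ respectively, producing an $L^p$ bound on $R_n/\norm{b_n}_{\ell^2}$ of order $\norm{b_n}_{\ell^p}\norm{b_n}_{\ell^2}^{-1}\norm{b_n}_{\ell^2}^{-\gamma\beta}C_p(\beta)$ (plus an analogous $L^2$ contribution). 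This is converted into a Kolmogorov distance bound via the standard truncation device: for any $\lambda>0$,
\[
\Delta_n\leq \sup_t\abs{\mathbb P\ens{\widetilde S_n/\norm{b_n}_{\ell^2}\leq t}-\Phi(t/\sigma)}+\mathbb P\ens{\abs{R_n}/\norm{b_n}_{\ell^2}>\lambda}+\lambda(\sigma\sqrt{2\pi})^{-1},
\]
and optimizing $\lambda$ using Markov's inequality at the exponent $p$ produces the exponent $p/(p+1)$ appearing in the last two terms of \eqref{eq:Berry_Esseen_weighted_sums}.

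Next I would apply a Berry-Esseen bound for $m$-dependent random fields (in the spirit of Heinrich) to $\widetilde S_n$ centered by its own standard deviation $\widetilde \sigma_n$: this yields a rate of order $(2m_n+1)^{(p'-1)d}\norm{X_{\gr{0}}}_{p'}^{p'}\norm{b_n}_{\ell^{p'}}^{p'}\widetilde\sigma_n^{-p'}$, which, combined with $m_n\asymp \norm{b_n}_{\ell^2}^{\gamma}$ and $\widetilde \sigma_n\geq (\sigma/2)\norm{b_n}_{\ell^2}$ for $n\geq n_0$, matches the first term in \eqref{eq:Berry_Esseen_weighted_sums}. The lower bound on $\widetilde \sigma_n$ is guaranteed by the definition \eqref{eq:definition_de_n0} of $n_0$ together with the estimate $\abs{\widetilde \sigma_n^2/\norm{b_n}_{\ell^2}^2-\sigma^2}\lesssim \eps_n+\norm{b_n}_{\ell^2}^{-\gamma\alpha}C_2(\alpha)^2$ obtained from the identity $\operatorname{Var}(\widetilde S_n)=\sum_{\gr{j}}\E{\widetilde X_{\gr{0}}\widetilde X_{\gr{j}}}\sum_{\gr{i}}b_{n,\gr{i}}b_{n,\gr{i}+\gr{j}}$ and condition \eqref{eq:condition_sur_b_n}.

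Finally, to replace the centering $\widetilde \sigma_n$ by $\sigma\norm{b_n}_{\ell^2}$ I would use the elementary bound $\sup_t\abs{\Phi(t/s_1)-\Phi(t/s_2)}\leq (2\pi e)^{-1/2}\abs{s_1^2/s_2^2-1}$; applied with $s_1=\widetilde \sigma_n/\norm{b_n}_{\ell^2}$ and $s_2=\sigma$, this generates the variance-correction term involving $\eps_n/\sigma^2$ and $\norm{b_n}_{\ell^2}^{-\gamma\alpha}C_2(\alpha)^2/\sigma^2$ in \eqref{eq:Berry_Esseen_weighted_sums}. Summing the four contributions gives \eqref{eq:Berry_Esseen_weighted_sums}, and \eqref{eq:Berry_Esseen_weighted_sums_sans_cst} follows by absorbing all constants. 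The step I expect to be the most delicate is the variance comparison: one needs to bound $\abs{\widetilde\sigma_n^2/\norm{b_n}_{\ell^2}^2-\sigma^2}$ sharply enough to recover precisely $\eps_n$ (and not some weaker quantity), by splitting the covariance series at the $m_n$-threshold and using the Cauchy–Schwarz-type control of the tail via $C_2(\alpha)$, while invoking \eqref{eq:condition_sur_b_n} to exchange the weighted covariance sums with $\norm{b_n}_{\ell^2}^2\sigma^2$ up to the $\eps_n$ perturbation.
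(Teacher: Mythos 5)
Your proposal is correct and follows essentially the same route as the paper: the same conditional-expectation $m$-dependent approximation with window $m\asymp\norm{b_n}_{\ell^2}^{\gamma}$, the tail version of Theorem~\ref{thm:moment_inequality_dependence_coefficients} to control $S_n-S_n^{(m)}$, a Berry--Esseen bound for $m$-dependent (locally dependent) fields applied to the truncated sum normalized by its own standard deviation, and a Gaussian rescaling step for the variance mismatch. Your truncation/Markov optimization producing the exponent $\frac{p}{p+1}$ is precisely the lemma of El Machkouri and Ouchti that the paper cites, and the delicate variance comparison you flag is handled in the paper simply by the reverse triangle inequality together with the $\mathbb L^2$ tail bound $\norm{S_n-S_n^{(m)}}_2\leq 29\pr{\log 2}^{-1}m^{-\alpha}\norm{b_n}_{\ell^2}C_2\pr{\alpha}$ and the definition of $n_0$ (which also supplies the lower bound on the standard-deviation ratio that your stated Gaussian comparison inequality implicitly requires), so the differences are only at the level of constants.
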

 \begin{Remark}
  If  \eqref{eq:condition_sur_b_n},  
 $\lim_{n\to +\infty}
\norm{b_n}_{\ell^2}=+\infty$
 and the family $\pr{\delta_{\gr{i},2}}_{\gr{i}\in \Z^d}$ is summable,
 then the sequence $\pr{\eps_n}_{n\geq 1}$ converges to $0$ 
 hence $n_0$ is well-defined. However, it is not clear to us whether the finiteness 
 of $C_2\pr{\alpha}$ combined with \eqref{eq:condition_sur_b_n} and 
 $\lim_{n\to +\infty}\norm{b_n}_{\ell^2}=+\infty$ imply that 
 $\sum_{\gr{j}\in \Z^d}\abs{\E{X_{\gr{0}}X_{\gr{j}}}}$ is finite. Nevertheless, we can
 show an analogous result in terms of $\delta_{\gr{i},p}$ coefficients by changing the 
 following in the statement of Theorem~\ref{thm:vitesse_de_convergence_sommes_ponderees}:
 \begin{enumerate}
 \item the definition of $C_2\pr{\alpha}$ should be replaced by 
 \begin{equation}
 C_2\pr{\alpha}:=\sqrt 2\sum_{j=0}^{+\infty}\pr{j+1}^{d/2+\alpha}
 \pr{\sum_{\norm{\gr{i}}_\infty=j}\delta_{\gr{i},2}^2}^{1/2};
 \end{equation}
 \item the definition of $C_p\pr{\beta}$ should be replaced by 
 \begin{equation}
 C_p\pr{\beta}:=\sqrt{2\pr{p-1}}\sum_{j=0}^{+\infty}\pr{j+1}^{d\pr{1-1/p}+\beta}
 \pr{\sum_{\norm{\gr{i}}_\infty=j}\delta_{\gr{i},2}^2}^{1/2}.
 \end{equation}
 \end{enumerate}
 In this case, the convergence of  $\sum_{\gr{i}\in \Z^d}\abs{\operatorname{Cov}
\pr{X_{\gr{0}},X_{\gr{i}}}}$ holds (cf. Proposition~2 in \cite{MR2988107}).
 \end{Remark}

 Recall notation~\ref{not:ensemble_translate}. Let
 $\pr{\Lambda_n}_{n\geq 1}$ be a sequence of subsets of $\Z^d$. 
 The choice $b_{n,\gr{j}}=1$ if $\gr{j}\in \Lambda_n$ and 
 $0$ otherwise yields the following corollary for set-indexed 
 partial sums.

  \begin{Corollary}\label{thm:vitesse_convergence_TLC}
  Let $\pr{X_{\gr{i}}}_{\gr{i}\in\Z^d}$ be a centered Bernoulli random field 
  with a finite moment of order $p\geq 2$, $p':=\min\ens{p,3}$
  and let $\pr{\Lambda_n}_{n\geq 1}$ be a sequence of subset of 
  $\Z^d$ such that $\abs{\Lambda_n}\to +\infty$ and for any $\gr{k}\in \Z^d$, 
  $\lim_{n\to +\infty}\abs{\Lambda_n\cap \pr{\Lambda_n-\gr{k}}}/
  \abs{\Lambda_n}=1$.
  Assume that the series defined in \eqref{eq:definition_de_C2_et_Cp} are 
  convergent for some positive $\alpha$ and $\beta$, that $\sum_{\gr{i}\in \Z^d}
  \abs{\operatorname{Cov}
\pr{X_{\gr{0}},X_{\gr{i}}}}$ is finite and that $\sigma$ 
  defined by \eqref{eq:definition_de_sigma} is positive.
Let $\gamma>0$ and $n_0$ be defined by \eqref{eq:definition_de_n0}.
There exists a constant $\kappa$ such that for any $n\geq n_0$, 
  
  \begin{multline}\label{eq:vitesse_convergence_TLC}
   \sup_{t\in \R}\abs{
   \mathbb P\ens{\frac{\sum_{\gr{i}\in \Lambda_n}X_{\gr{i}  }}{ 
   \abs{\Lambda_n}^{1/2}
   } \leq t
   }
   -\Phi\pr{t/\sigma}
   } \\
   \leq \kappa \pr{ \abs{\Lambda_n}^{q }
   +\sum_{\gr{j}\in \Z^d}\abs{ \E{X_{\gr{0}}X_{\gr{j}}}} \abs{\frac{
   \abs{\Lambda_n\cap \pr{\Lambda_n-\gr{j}} }    }
 { \abs{ \Lambda_n }
 }-1 }
 },
  \end{multline}
 where 
 \begin{equation}
  q:=\max\ens{\frac{\gamma\pr{p'-1}d-p'}2+1;-\gamma\alpha\frac{p}{2\pr{p+1}} 
  ;\frac{2-p-p\gamma\beta}{2\pr{p+1}}}.
 \end{equation}
  \end{Corollary}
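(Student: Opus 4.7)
The strategy is to apply Theorem~\ref{thm:vitesse_de_convergence_sommes_ponderees} with the indicator weights $b_{n,\gr{j}}:=\mathbf{1}_{\ens{\gr{j}\in\Lambda_n}}$, so that $S_n=\sum_{\gr{i}\in\Lambda_n}X_{\gr{i}}$ and $\norm{b_n}_{\ell^q}^q=\abs{\Lambda_n}$ for every $q\geq 1$; in particular $\norm{b_n}_{\ell^2}=\abs{\Lambda_n}^{1/2}\to+\infty$ and the support of $b_n$ is finite and nonempty. The first step is to verify hypothesis~\eqref{eq:condition_sur_b_n}. For this weight, $\norm{\tau_{\gr{e_q}}\pr{b_n}-b_n}_{\ell^2}^2$ equals the cardinality of the symmetric difference $\Lambda_n\triangle\pr{\Lambda_n-\gr{e_q}}$, namely $2\pr{\abs{\Lambda_n}-\abs{\Lambda_n\cap\pr{\Lambda_n-\gr{e_q}}}}$, so that the ratio $\norm{\tau_{\gr{e_q}}\pr{b_n}-b_n}_{\ell^2}/\norm{b_n}_{\ell^2}$ tends to $0$ by the assumption $\abs{\Lambda_n\cap\pr{\Lambda_n-\gr{k}}}/\abs{\Lambda_n}\to 1$.

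Next, I would identify $\eps_n$ of~\eqref{eq:definition_de_eps_n} with this weight. Since $b_{n,\gr{i}}b_{n,\gr{i}+\gr{j}}=\mathbf{1}_{\ens{\gr{i}\in\Lambda_n\cap\pr{\Lambda_n-\gr{j}}}}$, summation in $\gr{i}$ and division by $\norm{b_n}_{\ell^2}^2=\abs{\Lambda_n}$ yields the ratio $\abs{\Lambda_n\cap\pr{\Lambda_n-\gr{j}}}/\abs{\Lambda_n}$, hence $\eps_n=\sum_{\gr{j}\in\Z^d}\abs{\E{X_{\gr{0}}X_{\gr{j}}}}\cdot\abs{\abs{\Lambda_n\cap\pr{\Lambda_n-\gr{j}}}/\abs{\Lambda_n}-1}$, which is precisely the second summand on the right-hand side of~\eqref{eq:vitesse_convergence_TLC}. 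Under the stated hypotheses this quantity tends to $0$ by dominated convergence (with dominating function $2\abs{\E{X_{\gr{0}}X_{\gr{j}}}}$), so $n_0$ from~\eqref{eq:definition_de_n0} is well defined.

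Third, I would plug the $\ell^q$-norms into~\eqref{eq:Berry_Esseen_weighted_sums_sans_cst}. Using $\norm{b_n}_{\ell^2}=\abs{\Lambda_n}^{1/2}$, $\norm{b_n}_{\ell^{p'}}=\abs{\Lambda_n}^{1/p'}$ and $\norm{b_n}_{\ell^p}=\abs{\Lambda_n}^{1/p}$, the first, third and fourth terms on the right-hand side become respectively $\abs{\Lambda_n}^{\pr{\gamma\pr{p'-1}d-p'}/2+1}$, $\abs{\Lambda_n}^{-\gamma\alpha p/\pr{2\pr{p+1}}}$ and $\abs{\Lambda_n}^{\pr{2-p-p\gamma\beta}/\pr{2\pr{p+1}}}$. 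Since $\abs{\Lambda_n}\geq 1$ for $n$ large enough, each of these is at most $\abs{\Lambda_n}^q$ with $q$ the maximum of the three exponents, which is exactly the formula for $q$ in the statement. Combining this with the identification of the $\eps_n$ contribution yields~\eqref{eq:vitesse_convergence_TLC}. There is no genuine analytical difficulty; the main task is careful bookkeeping of exponents to collapse the four-term bound~\eqref{eq:Berry_Esseen_weighted_sums_sans_cst} into the two-term estimate~\eqref{eq:vitesse_convergence_TLC}.
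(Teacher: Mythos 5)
Your proposal is correct and takes essentially the same route as the paper, which obtains Corollary~\ref{thm:vitesse_convergence_TLC} exactly by specializing Theorem~\ref{thm:vitesse_de_convergence_sommes_ponderees} to the indicator weights $b_{n,\gr{j}}=1$ for $\gr{j}\in\Lambda_n$ and $0$ otherwise, leaving the verification implicit. Your checks of \eqref{eq:condition_sur_b_n}, the identification of $\eps_n$ with the covariance term, and the exponent bookkeeping giving $q$ are precisely the routine details this specialization requires.
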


 We consider now the following regression model:
 \begin{equation}
  Y_{\gr{i}}=g\pr{\frac{\gr{i}}n}+X_{\gr{i}}, \quad 
  \gr{i}\in\Lambda_n:=\ens{1,\dots,n}^d,
 \end{equation}
 where $g\colon [0,1]^d\to \R$ is an unknown smooth function 
 and $\pr{X_{\gr{i}}}_{\gr{i}\in \Z^d}$ is a zero mean 
 stationary Bernoulli random field. Let $K$ be a probability 
 kernel defined on $\R^d$ and let $\pr{h_n}_{n\geq 1}$ be a 
 sequence of positive numbers which converges to zero and which 
 satisfies 
 \begin{equation}\label{eq:assumption_nhn}
  \lim_{n\to+\infty}nh_n=+\infty\mbox{ and }\lim_{n\to+\infty}nh_n^{d+1}=0.
 \end{equation}

 We estimate the function 
 $g$ by the kernel estimator $g_n$ defined by 
 \begin{equation}\label{eq:definition_de_gnx}
 g_n\pr{\gr{x}}=\frac{\sum_{\gr{i}\in\Lambda_n}Y_{\gr{i}} 
 K\pr{\frac{\gr{x}-\gr{i}/n}{h_n}}}
 {\sum_{\gr{i}\in\Lambda_n} K\pr{\frac{\gr{x}-\gr{i}/n}{h_n}}}
 ,\quad x\in [0,1]^d.
 \end{equation}
 We make the following assumptions on the regression function $g$ 
 and the probability kernel $K$:
 
 \begin{enumerate}[label=(A)]
  \item\label{itm:assumption1} The probability kernel $K$ 
  fulfills $ \int_{\R^d}K\pr{\gr{u}}\mathrm d\gr{u}=1$, 
  is symmetric, non-negative, supported by $[-1,1]^d$. Moreover, there 
  exist positive constants $r$, $c$ and $C$ such that for any 
  $\gr{x},\gr{y}\in [-1,1]^d$, 
  $\abs{K\pr{\gr{x}}-K\pr{\gr{y}}} \leq r\norm{\gr{x}-\gr{y}}_\infty$ and 
  $c\leq K\pr{\gr{x}}\leq C$.
 
 \end{enumerate}
 
We measure the speed of convergence of $\pr{\pr{nh_n}^{d/2}\pr{g_n(\gr{x})-\E{g_n(\gr{x})}}}_{n\geq 1}$ to a normal distribution by the use of the quantity

 \begin{equation}
 \widetilde{\Delta_n}:=
 \sup_{t\in \R}\abs{\mathbb P\ens{\pr{nh_n}^{d/2}\pr{g_n(\gr{x})-\E{g_n(\gr{x})}} \leq t}
 -\Phi\pr{\frac{t}{\sigma\norm{K}_2   }}  }.
\end{equation}
Two other quantities will be involved, namely, 
\begin{equation}\label{eq:definition_de_An}
 A_n:=\pr{nh_n}^{d/2}
 \pr{\sum_{\gr{i}\in\Lambda_n} K^2\pr{\frac{\gr{x}-\gr{i}/n}{h_n}}    }^{1/2}
 \norm{K}_{\mathbb L^2\pr{\R^d}}^{-1}
 \pr{\sum_{\gr{i}\in\Lambda_n} K\pr{\frac{\gr{x}-\gr{i}/n}{h_n}}    }^{-1/2}
 \mbox{ and }
\end{equation}
\begin{equation}\label{eq:definition_de_eps_n_regression}
 \eps_n:=\sum_{\gr{j}\in \Z^d}\abs{\E{X_{\gr{0}}X_{\gr{j}}}}
 \pr{\sum_{\gr{i}\in\Lambda_n\cap  \pr{\Lambda_n-\gr{j}}}
 \frac{K\pr{\frac{\gr{x}-\gr{i}/n}{h_n}}
 K\pr{\frac{\gr{x}-\pr{\gr{i}-\gr{j}}/n}{h_n}}}{   \sum_{\gr{k}\in \Lambda_n}
 K^2\pr{\frac{\gr{x}-\gr{k}/n}{h_n}}
 }-1}.
\end{equation}

 \begin{Theorem}\label{thm:regression}
 Let $p>2$, $p':=\min\ens{p,3}$ and let $\pr{X_{\gr{j}}}_{\gr{j}\in\Z^d}=
\pr{f\pr{\pr{\eps_{\gr{j}-\gr{i}}}_{\gr{i}\in\Z^d}}}_{\gr{j}\in\Z^d}$ be a 
centered  Bernoulli 
random field with a finite moment of order $p$.
Assume that for some positive $\alpha$ and $\beta$, the following 
series are convergent: 
\begin{equation}\label{eq:definition_de_C2_et_Cp_bis}
 C_2\pr{\alpha}:=\sum_{i=0}^{+\infty}\pr{i+1}^{d/2+\alpha}
 \norm{X_{\gr{0},i}}_2\mbox{ and }C_p\pr{\beta}:=\sum_{i=0}^{+\infty}
 \pr{i+1}^{d\pr{1-1/p}+\beta}
 \norm{X_{\gr{0},i}}_p.
\end{equation}

Let $g_n\pr{\gr{x}}$ be defined by \eqref{eq:definition_de_gnx}, $\pr{h_n}_{n\geq 1}$ 
be a sequence which converges to $0$ and satisfies \eqref{eq:assumption_nhn},

Assume that $\sum_{\gr{i}\in \Z^d}\abs{\operatorname{Cov}
\pr{X_{\gr{0}},X_{\gr{i}}}}$ is finite and that $\sigma:=\sum_{\gr{j}\in \Z^d}\operatorname{Cov}
\pr{X_{\gr{0}},X_{\gr{j}}}>0$. 
Let $n_1\in \N$ be such that for each $n\geq n_1$, 
\begin{equation}\label{eq:}
\frac 12 \leq \pr{nh_n}^{-d} K\pr{\frac{\gr{x}-\gr{i}/n}{h_n}} \leq \frac 32\mbox{ and }
\end{equation}
\begin{equation}
 \frac 12\norm{K}_{\mathbb L^2\pr{\R^d}}
 \leq \pr{nh_n}^{-d} K^2\pr{\frac{\gr{x}-\gr{i}/n}{h_n}} \leq \frac 32
 \norm{K}_{\mathbb L^2\pr{\R^d}}.
\end{equation}
Let $n_0$ be the smallest integer for which for all $n\geq n_0$, 
 \begin{equation}
 \sqrt{\sigma^2+\eps_n}-29\pr{\log 2}^{-1}
  C_2\pr{\alpha} \pr{\left[\pr{\sum_{\gr{i}\in\Lambda_n}K\pr{\frac{1}{h_n} 
  \pr{\gr{x}-\frac{\gr{i}}n}}^2}^{1/2}     \right]^{\gamma}}^{-\alpha} 
  \geq \sigma/2 .
\end{equation}

Then there exists a constant $\kappa$ 
such that for each $n\geq \max\ens{n_0,n_1}$,
  \begin{multline}\label{eq:convergence_rates_regression}
   \Delta_n\leq \kappa \abs{A_n-1}^{\frac{p}{p+1}}+\abs{\eps_n}
   +\kappa\pr{nh_n}^{\frac{d}{2}\pr{\gamma\pr{p'-1}d-p'+2  }}\\
   +\pr{nh_n}^{-\frac{d}{2}\gamma\alpha\frac{p}{p+1}}
   +\pr{nh_n}^{\frac{2d-p\pr{\gamma\beta+1}   }{2\pr{p+1}} }.
  \end{multline} 
 
 \end{Theorem}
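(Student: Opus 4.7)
The plan is to reduce Theorem~\ref{thm:regression} to Theorem~\ref{thm:vitesse_de_convergence_sommes_ponderees} by a suitable choice of weights. Set $\tilde b_{n,\gr{i}}:=K\pr{(\gr{x}-\gr{i}/n)/h_n}$ when $\gr{i}\in\Lambda_n$ and $\tilde b_{n,\gr{i}}:=0$ otherwise, and write $S_n:=\sum_{\gr{i}\in\Z^d}\tilde b_{n,\gr{i}}X_{\gr{i}}$ and $D_n:=\sum_{\gr{i}\in\Lambda_n}K\pr{(\gr{x}-\gr{i}/n)/h_n}$. Since $g_n(\gr{x})-\E{g_n(\gr{x})}=S_n/D_n$, the definition of $A_n$ in~\eqref{eq:definition_de_An} yields the key identity
\begin{equation*}
\pr{nh_n}^{d/2}\pr{g_n(\gr{x})-\E{g_n(\gr{x})}} = A_n\,\norm{K}_{\mathbb L^2\pr{\R^d}}\cdot \frac{S_n}{\norm{\tilde b_n}_{\ell^2}}.
\end{equation*}

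Next I would verify that $\tilde b_n$ satisfies the hypotheses of Theorem~\ref{thm:vitesse_de_convergence_sommes_ponderees}. For $n\geq n_1$, the pointwise estimates on $K$ assumed in the statement give $\norm{\tilde b_n}_{\ell^2}^2$ of order $\pr{nh_n}^d$, hence $\norm{\tilde b_n}_{\ell^2}\to+\infty$. The Lipschitz property of $K$ gives $\abs{\tilde b_{n,\gr{i}+\gr{e_q}}-\tilde b_{n,\gr{i}}}\leq r\pr{nh_n}^{-1}$ for indices away from the boundary of $\Lambda_n$, so $\norm{\tau_{\gr{e_q}}\pr{\tilde b_n}-\tilde b_n}_{\ell^2}^2 = O\pr{\pr{nh_n}^{d-2}+\pr{nh_n}^{d-1}}$ (the second term absorbing boundary contributions), which is $o\pr{\norm{\tilde b_n}_{\ell^2}^2}$ by~\eqref{eq:assumption_nhn}; thus~\eqref{eq:condition_sur_b_n} holds. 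Riemann sum approximation then produces $\norm{\tilde b_n}_{\ell^q}^q \sim \pr{nh_n}^d\norm{K}_{\mathbb L^q\pr{\R^d}}^q$ for $q\in\ens{2,p',p}$, and the quantity $\eps_n$ appearing in the right-hand side of~\eqref{eq:Berry_Esseen_weighted_sums_sans_cst} when applied to $\tilde b_n$ is precisely the one defined by~\eqref{eq:definition_de_eps_n_regression}. Substituting these size estimates into~\eqref{eq:Berry_Esseen_weighted_sums_sans_cst} produces the $\abs{\eps_n}$ term together with the three powers of $\pr{nh_n}$ appearing in~\eqref{eq:convergence_rates_regression}.

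The remaining step, which I expect to be the main obstacle, is to transfer the Kolmogorov distance from $S_n/\norm{\tilde b_n}_{\ell^2}$ (controlled by the previous step) to $A_n\norm{K}_{\mathbb L^2\pr{\R^d}}\cdot S_n/\norm{\tilde b_n}_{\ell^2}$, and in particular to obtain the exponent $p/(p+1)$ on $\abs{A_n-1}$. I would argue via a truncation/Markov trick: for any $\eta>0$,
\begin{equation*}
\widetilde{\Delta_n} \leq \sup_{s\in\R}\abs{\mathbb P\ens{S_n/\norm{\tilde b_n}_{\ell^2}\leq s}-\Phi(s/\sigma)} + \mathbb P\ens{\frac{\abs{A_n-1}\abs{S_n}}{\norm{\tilde b_n}_{\ell^2}}>\eta} + \frac{\eta}{\sigma\norm{K}_{\mathbb L^2\pr{\R^d}}\sqrt{2\pi}},
\end{equation*}
the last term bounding the maximal Gaussian density. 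Using Theorem~\ref{thm:moment_inequality_dependence_coefficients} under the summability~\eqref{eq:definition_de_C2_et_Cp_bis}, $\norm{S_n/\norm{\tilde b_n}_{\ell^2}}_p$ is uniformly bounded, so Markov's inequality of order $p$ controls the middle probability by a constant times $\abs{A_n-1}^p/\eta^p$; optimizing over $\eta$ produces the $\abs{A_n-1}^{p/(p+1)}$ contribution in~\eqref{eq:convergence_rates_regression}.
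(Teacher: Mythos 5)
Your proposal is correct and follows essentially the same route as the paper: the same choice of weights $b_{n,\gr{i}}=K\pr{(\gr{x}-\gr{i}/n)/h_n}$, the same decomposition into a main term handled by Theorem~\ref{thm:vitesse_de_convergence_sommes_ponderees} and an $\pr{A_n-1}$ correction controlled in $\mathbb L^p$ via Theorem~\ref{thm:moment_inequality_dependence_coefficients}. Your truncation/Markov argument for the exponent $p/(p+1)$ is exactly the proof of the smoothing inequality \ref{itm:outil_2} that the paper cites from \cite{MR2364223}, so the only difference is that you re-derive that lemma inline (and verify the hypotheses on $b_n$ somewhat more explicitly than the paper does).
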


 Lemma~1 in \cite{MR2738877} shows that under 
 \eqref{eq:assumption_nhn},
 the sequence $\pr{A_n}_{n\geq 1}$ goes 
 to $1$ as $n$ goes to infinity and that the integer $n_1$ is 
 well-defined.
 
 We now consider the case of linear random fields in dimension 
 $2$, that is, 
 \begin{equation}
 X_{j_1,j_2}=\sum_{i_1,i_2\in\Z}a_{i_1,i_2} \eps_{j_1-i_1,j_2-i_2 },
 \end{equation}
  where $\pr{  a_{i_1,i_2} }_{i_1,i_2\Z}\in\ell^1\pr{\Z^2}$ and 
  $\pr{\eps_{u_1,u_2}}_{u_1,u_2\in\Z^2}$ is i.i.d., centered and $\eps_{0,0}$ 
  has a finite variance. We will focus on the case where the weights are of the form 
  $b_{n,i_1,i_2}=1$ if $1\leq i_1,i_2\leq n$ and $b_{n,i_1,i_2}=0$ otherwise.
 
Mielkaitis and Paulauskas \cite{MR2805741} established the following convergence rate. 
Denoting 
\begin{equation}
\Delta'_n:=
\sup_{r\geq 0}\abs{  \mathbb P
\ens{ \abs{\frac 1n\sum_{i_1,i_2=1}^nX_{i_1,i_2}}\leq r    }
-  \mathbb P
\ens{ \abs{N}\leq r    }
}
\end{equation}
and assuming that $\E{\abs{\eps_{0,0}}^{2+\delta} }$ is finite and 
\begin{equation}\label{eq:condition_on_coeff_MP}
\sum_{k_1,k_2\in\Z} \pr{\abs{k_1}+1}^2  \pr{\abs{k_2}+1}^2
a_{k_1,k_2}^2<+\infty,
\end{equation} 
the following estimate holds for $ \Delta'_n$:
 \begin{equation}
 \Delta'_n=O\pr{n^{-r} },\quad r:=\frac 12\min\ens{\delta,1-\frac 1{3+\delta}}.
 \end{equation}
In the context of Corollary~\ref{thm:vitesse_convergence_TLC}, the condition on the 
coefficients reads as follows:
\begin{equation}\label{eq:condition_on_coeff_G}
 \sum_{i=0}^{+\infty}\pr{\pr{i+1}^{1+\alpha}   +\pr{i+1}^{2-2/p+\beta}}
\pr{ \sum_{\pr{j_1,j_2}: \norm{\pr{j_1,j_2}}_\infty=i   } a_{j_1,j_2}^2 }^{1/2}<+\infty,
\end{equation}
where $p=2+\delta$.
 Let us compare \eqref{eq:condition_on_coeff_MP} with \eqref{eq:condition_on_coeff_G}. 
 Let $s:=\max\ens{1+\alpha,2-2/p+\beta}$. When $s\geq 2$,   
 \eqref{eq:condition_on_coeff_G} implies \eqref{eq:condition_on_coeff_MP}. 
 However, this implication does not hold if $s<3/2$. Indeed, let $r\in 
 \pr{s+1,5/2}$ and define $a_{k_1,k_2}:=k_1^{-r}$ if $k_1=k_2\geq 1$ and 
 $a_{k_1,k_2}:=0$ otherwise. Then  \eqref{eq:condition_on_coeff_G} holds 
 whereas \eqref{eq:condition_on_coeff_MP} does not.
 
 Let us discuss the convergence rates in the following example. Let $a_{k_1,k_2}:=
 2^{-\abs{k_1}-\abs{k_2}}$ and let $p=2+\delta$, where $\delta\in (0,1]$. 
 In our context, 
 \begin{equation}
 \abs{\frac{
   \abs{\Lambda_n\cap \pr{\Lambda_n-\gr{j}} }    }
 { \abs{ \Lambda_n }
 }-1 }\leq \frac{n^2-\pr{n-j_1}\pr{n-j_2}}{n^2}\leq \frac{j_1+j_2}n
 \end{equation}
 hence the convergence of $\sum_{j_1,j_2\in \Z}  
\abs{\operatorname{Cov}\pr{X_{0,0},X_{j_1,j_2}} } \pr{j_1+j_2} 
 $
 guarantees that $\varepsilon_n$ in 
 Corollary~\ref{thm:vitesse_convergence_TLC} is of order $1/n$. 
Moreover, since \eqref{eq:condition_on_coeff_G} holds for all $\alpha$ and 
$\beta$, the choice of $\gamma$ allows to reach rates of the form 
$n^{-\delta+r_0}$ for any fixed $r_0$. In particular, when $\delta=1$, one 
can reach for any fixed $r_0$ rates of the form $n^{-1+r_0}$. In comparison, 
with the same assumptions, the result of \cite{MR2805741} gives 
$n^{-3/8}$.

\section{Proofs}
 \label{sec:proofs}
 
 \subsection{Proof of Theorem~\ref{thm:moment_inequality_dependence_coefficients}}
 
 We define for $j\geq 1$ and $\gr{i}\in \Z^d$, 
 \begin{equation}
  X_{\gr{i},j}
  =\E{X_i \mid \sigma\pr{\eps_{\gr{u}}, \norm{\gr{u}-\gr{i}}_{\infty}
  \leq j}}-\E{X_i \mid \sigma\pr{\eps_{\gr{u}}, \norm{\gr{u}-\gr{i}}_{\infty}
  \leq j-1}}.
 \end{equation}
 In this way, by the martingale convergence theorem, 
 \begin{equation}\label{eq:decomposition_de_X_i}
  X_{\gr{i}}-\E{X_{\gr{i}} \mid \eps_{\gr{i}}}=
  \lim_{N\to +\infty}\sum_{j=1}^NX_{\gr{i},j}
 \end{equation}
 hence 
 \begin{equation}\label{eq:bound_proof_Wu0}
  \norm{\sum_{\gr{i}\in \Z^d}a_{\gr{i}}X_{\gr{i}}}_p
  \leq \sum_{j=1}^{+\infty}\norm{\sum_{\gr{i}\in \Z^d}a_{\gr{i}}X_{\gr{i},j}}_p
  +\norm{\sum_{\gr{i}\in \Z^d}a_{\gr{i}}\E{X_{\gr{i}} \mid \eps_{\gr{i}}}}_p.
 \end{equation}
 Let us fix $j\geq 1$.
We divide $\Z^d$ into blocks. For $\gr{v}\in \Z^d$, we define 
 \begin{equation}
  A_{\gr{v}}:=\prod_{q=1}^d 
  \pr{\left[\pr{2j+2}v_q,\pr{2j+2}\pr{v_q+1}-1 \right]\cap \Z},
 \end{equation}
and if $K$ is a subset of $[d]$, we define 
\begin{equation}
 E_K:=\ens{\gr{v}\in \Z^d, v_q\mbox{ is even if and only if }q\in K}.
\end{equation}
Therefore, the following inequality takes place 
\begin{equation}\label{eq:bound_proof_Wu1}
 \norm{\sum_{\gr{i}\in \Z^d}a_{\gr{i}}X_{\gr{i},j}}_p
 \leq \sum_{K\subset [d]}
\norm{\sum_{\gr{v}\in E_K}\sum_{\gr{i}\in A_{\gr{v}}}
a_{\gr{i}}X_{\gr{i},j}}_p.
\end{equation}

Observe that the random variable $\sum_{\gr{i}\in A_{\gr{v}}}
a_{\gr{i}}X_{\gr{i},j}$ is measurable for the $\sigma$-algebra 
generated by $\eps_{\gr{u}}$, where $\gr{u}$ satisfies 
$\pr{2j+2}v_q-\pr{j+1}\leq u_q\leq j+1+\pr{2j+2}\pr{v_q+1}-1$ 
for all $q\in [d]$. Since the family 
$\ens{\eps_{\gr{u}},\gr{u}\in \Z^d}$ is independent, the 
family $\ens{\sum_{\gr{i}\in A_{\gr{v}}}
a_{\gr{i}}X_{\gr{i},j}, \gr{v}\in E_K}$ is independent for each fixed
$K\subset [d]$. Using 
inequality \eqref{eq:Rosenthal}, it thus follows that 
\begin{multline} 
 \norm{\sum_{\gr{v}\in E_K}\sum_{\gr{i}\in A_{\gr{v}}}
a_{\gr{i}}X_{\gr{i},j}}_p 
\leq \frac{14.5p}{\log p}
\pr{\sum_{\gr{v}\in E_K} \norm{\sum_{\gr{i}\in A_{\gr{v}}}
a_{\gr{i}}X_{\gr{i},j}}_2^2}^{1/2}\\
+\frac{14.5p}{\log p}
\pr{\sum_{\gr{v}\in E_K} \norm{\sum_{\gr{i}\in A_{\gr{v}}}
a_{\gr{i}}X_{\gr{i},j}}_p^p}^{1/p}.
\end{multline}
By stationarity, one 
can see that $\norm{X_{\gr{i},j}}_q=\norm{X_{\gr{0},j}}_q$ for $q\in \ens{2,p}$, 
hence the triangle inequality yields 
\begin{multline}
 \norm{\sum_{\gr{v}\in E_K}\sum_{\gr{i}\in A_{\gr{v}}}
a_{\gr{i}}X_{\gr{i},j}}_p 
\leq \frac{14.5p}{\log p}\norm{X_{\gr{0},j}}_2
\pr{\sum_{\gr{v}\in E_K} \pr{\sum_{\gr{i}\in A_{\gr{v}}}
\abs{a_{\gr{i}} }}^2}^{1/2}\\
+\frac{14.5p}{\log p}\norm{X_{\gr{0},j}}_p
\pr{\sum_{\gr{v}\in E_K} \pr{\sum_{\gr{i}\in A_{\gr{v}}}
 \abs{a_{\gr{i}}} } ^p}^{1/p}.
\end{multline}
By Jensen's inequality, for $q\in \ens{2,p}$, 
\begin{equation}
  \pr{\sum_{\gr{i}\in A_{\gr{v}}}
\abs{a_{\gr{i}} }}^q \leq \abs{ A_{\gr{v}}  }^{q-1}
 \sum_{\gr{i}\in A_{\gr{v}}}
\abs{a_{\gr{i}}}^q\leq \pr{2j+2}^{d\pr{q-1}}\sum_{\gr{i}\in A_{\gr{v}}}
\abs{a_{\gr{i}}}^q
\end{equation}
and using $\sum_{i=1}^Nx_i^{1/q}\leq N^{\frac{q-1}q} \pr{\sum_{i=1}^Nx_i}^{1/q}$, 
it follows that 
\begin{multline}\label{eq:bound_proof_Wu2}
 \sum_{K\subset[d]}\norm{\sum_{\gr{v}\in E_K}\sum_{\gr{i}\in A_{\gr{v}}}
a_{\gr{i}}X_{\gr{i},j}}_p
\leq  \frac{14.5p}{\log p}\norm{X_{\gr{0},j}}_2
\pr{\sum_{\gr{i}\in \Z^d}a_i^2}^{1/2}\pr{4j+4}^{d/2}\\
+\frac{14.5p}{\log p}\norm{X_{\gr{0},j}}_p
\pr{\sum_{\gr{i}\in \Z^d}\abs{a_i}^p}^{1/p}\pr{4j+4}^{d\pr{1-1/p}}.
\end{multline}

Combining \eqref{eq:bound_proof_Wu0}, \eqref{eq:bound_proof_Wu1} and  \eqref{eq:bound_proof_Wu2}, we derive that 
\begin{multline}
\norm{\sum_{\gr{i}\in \Z^d}a_{\gr{i}}X_{\gr{i}}}_p
  \leq  \frac{14.5p}{\log p}\sum_{j=1}^{+\infty} \norm{X_{\gr{0},j}}_{2}
\pr{\sum_{\gr{i}\in \Z^d}a_i^2}^{1/2}\pr{4j+4}^{d/2}\\
+\frac{14.5p}{\log p}\sum_{j=1}^{+\infty} \norm{X_{\gr{0},j}}_{p}
\pr{\sum_{\gr{i}\in \Z^d}\abs{a_i}^p}^{1/p}\pr{4j+4}^{d\pr{1-1/p}}
  +\norm{\sum_{\gr{i}\in \Z^d}a_{\gr{i}}\E{X_{\gr{i}} \mid \eps_{\gr{i}}}}_p.
\end{multline}
In order to control the last term, we use inequality \eqref{eq:Rosenthal}
and bound $\norm{\E{X_{\gr{i}} \mid \eps_{\gr{i}}}}_q$ 
by $\norm{X_{\gr{0},0}}_{q}$ for $q\in\ens{1,2}$. This ends the proof of 
Theorem~\ref{thm:moment_inequality_dependence_coefficients}.
 
 \begin{proof}[Proof of Corollary~\ref{cor:moment_inequality_Wu}]
 The following lemma gives a control of the $\mathbb L^q$-norm of 
$X_{\gr{0},j}$ in terms of the physical measure dependence.
 
 \begin{Lemma}\label{lem:norm_Xoj_delta}
 For $q\in \ens{2,p}$ and $j\in \N$, the following inequality holds
 \begin{equation}\label{eq:bound_proof_Wu3}
  \norm{X_{\gr{0},j}}_q\leq 
  \pr{2\pr{q-1}\sum_{\gr{i}\in \Z^d,\norm{\gr{i}}_\infty=j} \delta_{\gr{i},q}^2}^{1/2}.
 \end{equation}

\end{Lemma}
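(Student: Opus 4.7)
The plan is to exhibit $X_{\gr{0},j}$ as a sum of martingale differences, one per site $\gr{u}\in\Z^d$ with $\norm{\gr{u}}_\infty=j$, and then apply a Burkholder-type inequality in $\mathbb L^q$. Enumerate the finite set $S_j:=\ens{\gr{u}\in\Z^d:\norm{\gr{u}}_\infty=j}$ as $\gr{u}_1,\dots,\gr{u}_{\abs{S_j}}$ and set
\[
\Gca_m:=\sigma\pr{\eps_{\gr{v}},\norm{\gr{v}}_\infty\leq j-1}\vee\sigma\pr{\eps_{\gr{u}_1},\dots,\eps_{\gr{u}_m}},\qquad 0\leq m\leq \abs{S_j}.
\]
Then $\Gca_0$ and $\Gca_{\abs{S_j}}$ are precisely the two $\sigma$-algebras that define $X_{\gr{0},j}$, so
\[
X_{\gr{0},j}=\sum_{m=1}^{\abs{S_j}}D_m,\qquad D_m:=\E{X_{\gr{0}}\mid\Gca_m}-\E{X_{\gr{0}}\mid\Gca_{m-1}},
\]
and $(D_m)$ is a martingale difference sequence adapted to $(\Gca_m)$.

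Next I would bound $\norm{D_m}_q$ in terms of the physical dependence measure. Introduce an independent copy $\eps'_{\gr{u}_m}$ of $\eps_{\gr{u}_m}$ that is also independent of the whole field $\eps$, and let $X_{\gr{0}}^{(m)}$ denote $X_{\gr{0}}$ with the coordinate $\eps_{\gr{u}_m}$ replaced by $\eps'_{\gr{u}_m}$. Since the pair $(\eps_{\gr{u}_m},\eps'_{\gr{u}_m})$ is exchangeable and independent of $\Gca_{m-1}$, a short computation gives the key identity $\E{X_{\gr{0}}\mid\Gca_{m-1}}=\E{X_{\gr{0}}^{(m)}\mid\Gca_m}$, whence $D_m=\E{X_{\gr{0}}-X_{\gr{0}}^{(m)}\mid\Gca_m}$. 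Conditional Jensen then yields $\norm{D_m}_q\leq \norm{X_{\gr{0}}-X_{\gr{0}}^{(m)}}_q$, and stationarity of the i.i.d. field $\eps$ identifies this norm with $\delta_{-\gr{u}_m,q}$: indeed, the joint law of perturbing $\eps_{\gr{u}_m}$ inside $X_{\gr{0}}$ matches, after shifting by $-\gr{u}_m$, that of perturbing $\eps_{\gr{0}}$ inside $X_{-\gr{u}_m}$.

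Finally I would invoke a Burkholder/Rio inequality for martingale differences in $\mathbb L^q$ with $q\geq 2$, in the form $\norm{\sum_m D_m}_q^2\leq 2(q-1)\sum_m\norm{D_m}_q^2$, and combine it with the previous bound to obtain
\[
\norm{X_{\gr{0},j}}_q^2\leq 2(q-1)\sum_{m=1}^{\abs{S_j}}\delta_{-\gr{u}_m,q}^2=2(q-1)\sum_{\norm{\gr{i}}_\infty=j}\delta_{\gr{i},q}^2,
\]
the last equality being the invariance of $S_j$ under $\gr{u}\mapsto-\gr{u}$. The case $j=0$, where $X_{\gr{0},0}=\E{X_{\gr{0}}\mid\eps_{\gr{0}}}$, follows from exactly the same conditional-Jensen comparison applied to $X_{\gr{0}}-X_{\gr{0}}^*$. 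The conceptual heart, and the only nontrivial step, is the coupling identity $\E{X_{\gr{0}}\mid\Gca_{m-1}}=\E{X_{\gr{0}}^{(m)}\mid\Gca_m}$: it is what converts a ``remove one coordinate'' conditional-expectation difference into a ``perturb one coordinate'' coupling, and so makes the coefficient $\delta_{\gr{i},q}$ appear naturally.
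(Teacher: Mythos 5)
Your proof is correct and follows essentially the same route as the paper: telescoping $X_{\gr{0},j}$ into martingale differences indexed by the sites of the sphere $\ens{\norm{\gr{i}}_\infty=j}$, applying Rio's $\mathbb L^q$ Burkholder-type inequality, and bounding each increment via the physical dependence coupling (which is exactly the argument from Wu's Theorem~1(i) that the paper cites). The only differences are cosmetic and in your favor: your coupling identity yields the single coefficient $\delta_{-\gr{u}_m,q}$ per increment (so no adjacency ordering of the enumeration is needed, and the stated factor $2(q-1)$ even has slack), and you treat the $j=0$ case explicitly.
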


\begin{proof}
Let $j$ be fixed. Let us write the set of elements of $\Z^d$ whose infinite norm 
is equal to $j$
as $\ens{\gr{v_s},1\leq s\leq N_j}$ where $N_j\in \N$. We also assume that 
$\gr{v_{s}}-\gr{v_{s-1}}\in\ens{\gr{e_k}, 1\leq k\leq d}$ for all $s\in\ens{2,\dots,N_j}$.

Denote 
\begin{equation}
 \f_s:=\sigma\pr{ \eps_{\gr{u}},\norm{\gr{u}}_\infty \leq j, 
 \eps_{\gr{v_t}},1\leq t\leq s},
\end{equation}
and $\f_0:=\sigma\pr{ \eps_{\gr{u}},\norm{\gr{u}}_\infty \leq j}$.
Then $X_{\gr{0},j}=\sum_{s=1}^{N_j}
\E{X_{\gr{0}} \mid \f_s}- 
\E{X_{\gr{0}} \mid \f_{s-1}}$, from which it follows, by Theorem~2.1
in \cite{MR2472010}, that 
\begin{equation}
 \norm{X_{\gr{0},j}}_q^2\leq \pr{q-1}\sum_{s=1}^{N_j}
\norm{\E{X_{\gr{0}} \mid \f_s}- 
\E{X_{\gr{0}} \mid \f_{s-1}}}_q^2.
\end{equation}
Then using similar arguments as in the proof of Theorem~1~(i) in \cite{MR2172215}
give the bound $\norm{\E{X_{\gr{0}} \mid \f_s}- 
\E{X_{\gr{0}} \mid \f_{s-1}}}_q\leq \delta_{\gr{v_s},q}+\delta_{\gr{v_{s-1}},q}$. 
This ends the proof of Lemma~\ref{lem:norm_Xoj_delta}.
\end{proof}
Now, Corollary~\ref{cor:moment_inequality_Wu} follows from an application 
of Lemma~\ref{lem:norm_Xoj_delta} with $q=2$ and $q=p$ respectively.
 \end{proof}
 
\subsection{Proof of Theorem~\ref{thm:vitesse_de_convergence_sommes_ponderees}}

Denote for a random variable $Z$ the quantity 
\begin{equation}
 \delta\pr{Z}:=\sup_{t\in \R}
 \abs{
 \mathbb P\ens{Z\leq t}-\Phi\pr{t}
 }.
\end{equation}

We say that a random field $\pr{Y_{\gr{i}}}_{\gr{i}\in \Z^d}$ 
is $m$-dependent if the collections of random variables 
$\pr{Y_{\gr{i}},\gr{i}\in A}$ and $\pr{Y_{\gr{i}},\gr{i}\in B}$ 
are independent whenever $\inf\ens{\norm{\gr{a}-\gr{b}}_{\infty},
\gr{a}\in A,\gr{b}\in B}>m$.
The proof of Theorem~\ref{thm:vitesse_de_convergence_sommes_ponderees} will use 
the following tools.
\begin{enumerate}[label=(T.\arabic*)]
 \item\label{itm:outil_1} By Theorem~2.6 in \cite{MR2073183}, if $I$ is a finite subset of $\Z^d$, 
 $\pr{Y_{\gr{i}}}_{\gr{i}\in I}$ an $m$-dependent centered random field such that 
 $\E{\abs{Y_{\gr{i}}}^p}<+\infty$ for each $\gr{i}\in I$ and some 
 $p\in (2,3]$ and $\operatorname{Var}\pr{\sum_{\gr{i}\in I}Y_{\gr{i}}
 }=1$, then 
 \begin{equation}
  \delta\pr{\sum_{\gr{i}\in I}Y_{\gr{i}    }}\leq 75\pr{10m+1}^{\pr{p-1}d}\sum_{\gr{i}\in I}
  \E{\abs{Y_{\gr{i}}}^p}.
 \end{equation}
\item\label{itm:outil_2} By Lemma~1 in \cite{MR2364223}, for any two random variables 
$Z$ and $Z'$ and $p\geq 1$, 
\begin{equation}
 \delta\pr{Z+Z'}\leq 2\delta\pr{Z}+\norm{Z'}_p^{\frac{p}{p+1}}.
\end{equation}

 \end{enumerate}

 Let $\pr{\eps_{\gr{u}}}_{\gr{u}\in \Z^d}$ be an i.i.d. random field 
 and let $f\colon \R^{\Z^d}\to \R$ be a measurable function such that 
 for each $\gr{i}\in \Z^d$, $X_{\gr{i}}=f\pr{\pr{\eps_{\gr{i}-\gr{u}}}_{\gr{u}\in\Z^d}}$. 
 Let $\gamma>0$ and $n_0$ defined by \eqref{eq:definition_de_n0}.
 
Let $m:= \pr{\left[\norm{b_n}_{\ell^2}\right]+1}^\gamma$ and let us define 
 \begin{equation}
  X_{\gr{i}}^{(m)}:=
  \E{X_{\gr{i}} \mid  
  \sigma\pr{\eps_{\gr{u}}, \gr{i}-m\gr{1}\imd \gr{u}\imd\gr{i}+ m\gr{1}}
  }.
 \end{equation}
 Since the random field $\pr{\eps_{\gr{u}}}_{\gr{u}\in \Z^d}$ is independent, the 
following properties hold.
\begin{enumerate}[label=(P.\arabic*)]
 \item\label{itm:propriete_1} The random field $\pr{X_{\gr{i}}^{(m)}}_{\gr{i}\in \Z^d}$ is 
 $\pr{2m+1}$-dependent.
 \item\label{itm:propriete_moments} The random field $\pr{X_{\gr{i}}^{(m)}}_{\gr{i}\in \Z^d}$ is identically 
 distributed and $\norm{X_{\gr{i}}^{(m)}}_{p'}
 \leq \norm{X_{\gr{0}}}_{p'}$.
 \item\label{itm:moments_inequality} For any $\pr{a_{\gr{i}}}_{\gr{i}\in \Z^d}\in \ell^2\pr{\Z^d}$ and 
 $q\geq 2$, the 
 following inequality holds:
\begin{multline}\label{eq:moment_inequality_approximation}
   \norm{\sum_{\gr{i}\in \Z^d} a_{\gr{i}}
   \pr{X_{\gr{i}}-X_{\gr{i}}^{(m)} }   }_q
   \leq 
  \frac{14.5q}{\log q}\pr{\sum_{\gr{i}\in \Z^d}a_{\gr{i}}^2}^{1/2}
   \sum_{j \geq m   } \pr{4j+4 }^{d/2}\norm{X_{\gr{0},j}}_2\\
   +  \frac{14.5q}{\log q} 
   \pr{\sum_{\gr{i}\in \Z^d}\abs{a_{\gr{i}}}^q}^{1/q}
   \sum_{j \geq m   }   
   \pr{4j+4 }^{d\pr{1-1/q}}\norm{X_{\gr{0},j}}_q.
  \end{multline}
  In order to prove \eqref{eq:moment_inequality_approximation}, 
  we follow the proof of Theorem~\ref{thm:moment_inequality_dependence_coefficients} 
  and start from the decomposition 
  $X_{\gr{i}}-X_{\gr{i}}^{(m)}=\lim_{N\to +\infty}\sum_{j=m}^NX_{\gr{i},j}$ 
  instead of \eqref{eq:decomposition_de_X_i}.
\end{enumerate}
Define $S_n^{(m)}:=\sum_{\gr{i}\in \Z^d}b_{n,\gr{i}}X_{\gr{i}}^{(m)}$.
An application of \ref{itm:outil_2} to $Z:= S_n^{(m)}\norm{b_n}_{\ell^2}^{-1}
\sigma^{-1}$ 
and $Z':=\pr{S_n-S_n^{(m)}}\norm{b_n}_{\ell^2}^{-1}\sigma^{-1}$ yields 
\begin{equation}
 \Delta_n\leq 2\delta\pr{\frac{S_n^{(m)}}{\sigma\norm{b_n}_{\ell^2}}}
 +\sigma^{-\frac{p}{p+1}}\frac 1{\norm{b_n}_{\ell^2}^{\frac{p}{p+1}}} 
 \norm{S_n-S_n^{(m)}}_p^{\frac{p}{p+1}}.
\end{equation}
Moreover, 
\begin{align}
 \delta\pr{\frac{S_n^{(m)}}{\sigma\norm{b_n}_{\ell^2}}}
 &=\sup_{t\in \R}
 \abs{\mathbb P\ens{\frac{S_n^{(m)}}{\sigma\norm{b_n}_{\ell^2}} \leq t }
 -\Phi\pr{t}
 }\\
 &=\sup_{u\in \R}
 \abs{\mathbb P\ens{\frac{S_n^{(m)}}{ \norm{S_n^{(m)} }_2} \leq u }
 -\Phi\pr{u\frac{\norm{S_n^{\pr{m}}}_2}{\sigma \norm{b_n}_{\ell^2}    }
 }}\\
 &\leq \delta\pr{\frac{S_n^{(m)}}{ \norm{S_n^{(m)} }_2}   }
 +\sup_{u\in \R}\abs{\Phi\pr{u\frac{\norm{S_n^{\pr{m}}}_2}{\sigma \norm{b_n}_{\ell^2} }}
 -\Phi\pr{u}},
\end{align}
hence, by \ref{itm:propriete_1} and \ref{itm:outil_1} applied with 
$Y_{\gr{i}}:=X_{\gr{i}}^{(m)}/\norm{S_n^{\pr{m}}}_2$, 
$p'$ instead of $p$ and $2m+1$ instead of $m$, we derive that 
\begin{equation}\label{eq:bound_Delta_n}
 \Delta_n\leq (I)+(II)+(III) 
\end{equation}
where 
\begin{equation}
 (I):=150\pr{20m+21}^{\pr{p'-1}d}\sum_{\gr{i}\in \Z^d}\abs{b_{n,\gr{i}}}^{p'}
 \norm{X_i^{\pr{m}}}_{p'}^{p'}\norm{S_n^{\pr{m}}}_{2}^{-p'},
\end{equation}
\begin{equation}
 (II):=2\sup_{u\in \R}
 \abs{\Phi\pr{u\frac{\norm{S_n^{\pr{m}}}_2}{\sigma \norm{b_n}_{\ell^2} }}-
 \Phi\pr{u}}
 \mbox{ and }
\end{equation}
\begin{equation}
 (III):=\sigma^{-\frac{p}{p+1}}\frac 1{\norm{b_n}_{\ell^2}^{\frac{p}{p+1}}} 
 \norm{S_n-S_n^{(m)}}_p^{\frac{p}{p+1}}.
\end{equation}
By \ref{itm:propriete_moments} and the reversed triangular inequality, 
the term $(I)$ can be bounded in the following way
\begin{equation}
 (I)\leq 150\pr{20m+21}^{\pr{p'-1}d}\norm{X_{\gr{0}}}_{p'}^{p'}
 \norm{b_n}_{\ell^{p'}}^{p'}
 \pr{\norm{S_n}_{2}-\norm{S_n-S_n^{\pr{m}}    }_2}^{-p'}
\end{equation}
and by \ref{itm:moments_inequality} with $q=2$, we obtain that 
\begin{equation}
 \pr{\norm{S_n}_{2}-\norm{S_n-S_n^{\pr{m}}    }_2}^{-p'} 
 \leq \pr{\norm{S_n}_{2}-29\pr{\log 2}^{-1} m^{-\alpha}\norm{b_n}_{\ell^2} C_2\pr{\alpha}       }^{-p'}.
\end{equation}
By \eqref{eq:definition_de_eps_n}, we have 
\begin{equation}
 \frac{\norm{S_n}_{2}^2}{\norm{b_n}_{\ell^2}^2}=\sigma^2+\eps_n,
\end{equation}
and we eventually get 
\begin{equation*}
 (I)\leq 150\pr{20m+21}^{\pr{p'-1}d}\norm{X_{\gr{0}}}_{p'}^{p'}
 \pr{\frac{\norm{b_n}_{\ell^{p'}} }{\norm{b_n}_{\ell^{2}} } }^{p'}
 \pr{\sqrt{\sigma^2+\eps_n}-29\pr{\log 2}^{-1} m^{-\alpha}C_2\pr{\alpha}   }^{-p'}.
\end{equation*}
Since $n\geq n_0$, we derive, in view of \eqref{eq:definition_de_n0}, 
\begin{equation}\label{eq:I}
 (I)\leq 150\pr{20m+21}^{\pr{p'-1}d}\norm{X_{\gr{0}}}_{p'}^{p'}
 \pr{\frac{\norm{b_n}_{\ell^{p'}} }{\norm{b_n}_{\ell^{2}} } }^{p'}
 \pr{\sigma/2  }^{-p'}
\end{equation}

In order to bound $(II)$, we argue as in \cite{MR2869760} (p. 456). Doing 
similar computations as in \cite{MR3225977} (p. 272), we obtain that 
\begin{equation}
 (II)\leq \pr{2\pi e}^{-1/2}\pr{\inf_{k\geq 1} a_k}^{-1}\abs{a_n^2-1},
\end{equation}
where $a_n:=\norm{S_n^{\pr{m}}}_2\sigma^{-1}\norm{b_n}_{\ell^2}^{-1}$. 
Observe that for any $n$, by \ref{itm:moments_inequality}, 
\begin{equation}
 a_n\geq \frac{\norm{S_n}_2-\norm{S_n-S_n^{\pr{m}}}_2}{\sigma\norm{b_n}_{\ell^2}}
 \geq \frac{\sqrt{\sigma^2+\eps_n}-29\pr{\log 2}^{-1}
  C_2\pr{\alpha} m^{-\alpha}   }{\sigma }
\end{equation}
and using again \ref{itm:moments_inequality} combined with 
Theorem~\ref{thm:moment_inequality_dependence_coefficients} for $p=q=2$,
\begin{align}
 \abs{a_n^2-1}&=\abs{\frac{\norm{S_n^{\pr{m}}}_2^2}{\sigma^2
 \norm{b_n}_{\ell^2}^2      }-1}\\
 &\leq \abs{\frac{\norm{S_n}_2^2}{\sigma^2
 \norm{b_n}_{\ell^2}^2      }-1}+
 \frac{\abs{\norm{S_n^{\pr{m}}}_2^2-\norm{S_n}_2^2}}{\sigma^2
 \norm{b_n}_{\ell^2}^2      }\\
 &\leq \frac{\abs{\eps_n}}{\sigma^2}+
  \frac{\abs{\norm{S_n^{\pr{m}}}_2- \norm{S_n}_2}\pr{
  \norm{S_n^{\pr{m}}}_2+ \norm{S_n}_2}    }{\sigma^2
 \norm{b_n}_{\ell^2}^2      }\\
 &\leq \frac{\abs{\eps_n}}{\sigma^2}+
  \frac{ \norm{S_n^{\pr{m}}- S_n}_2\pr{
  \norm{S_n^{\pr{m}}}_2+ \norm{S_n}_2}    }{\sigma^2
 \norm{b_n}_{\ell^2}^2      }\\
 &\leq \frac{\abs{\eps_n}}{\sigma^2}+40\pr{\log 2}^{-1}
 \frac{m^{-\alpha}}{\sigma^2}C_2\pr{\alpha}^2.
\end{align}
This leads to the estimate
\begin{equation}
 (II)\leq \frac{\pr{2\pi e}^{-1/2} }{\sqrt{\sigma^2+\eps_n}-29\pr{\log 2}^{-1}
  C_2\pr{\alpha} m^{-\alpha}   }\pr{\frac{\abs{\eps_n}}{\sigma}+40\pr{\log 2}^{-1}
 \frac{m^{-\alpha}}{\sigma}C_2\pr{\alpha}^2},
\end{equation}
and since $n\geq n_0$, we derive, in view of \eqref{eq:definition_de_n0}, 
\begin{equation}\label{eq:II}
 (II)\leq \pr{2\frac{\abs{\eps_n}}{\sigma^2}+80\pr{\log 2}^{-1}
 \frac{ \norm{b_n}_{\ell^2} ^{-\gamma\alpha}}{\sigma^2}C_2\pr{\alpha}^2}
 \pr{2\pi e}^{-1/2}.
\end{equation}

The estimate of $(III)$ relies on \ref{itm:moments_inequality}:
\begin{multline}
 (III)\leq \sigma^{-\frac{p}{p+1}}
  \pr{\frac{14.5p}{\log p} 
   \sum_{j \geq m   }\pr{4j+4 }^{d/2}
   \norm{X_{\gr{0},j}}_2}^{\frac{p}{p+1}}\\
   + \sigma^{-\frac{p}{p+1}}\norm{b_n}_{\ell^2}^{-\frac{p}{p+1}} 
   \norm{b_n}_{\ell^p}^{\frac{p}{p+1}}\pr{\frac{14.5p}{\log p} 
   \sum_{j \geq m   }   
   \pr{4j+4 }^{d\pr{1-1/p}}\norm{X_{\gr{0},j}}_p}^{\frac{p}{p+1}}
\end{multline}
 hence 
\begin{equation}\label{eq:III}
 (III)\leq\pr{ \frac{14.5p}{\sigma\log p} 4^{d/2}\norm{b_n}_{\ell^2}^{-\gamma\alpha}
 C_2\pr{\alpha}   }^{\frac{p}{p+1}}+
   \pr{\frac{\norm{b_n}_{\ell^p}}{\sigma \norm{b_n}_{\ell^2}   }\frac{14.5p}{\log p} 
   4^{d\pr{1-1/p}} \norm{b_n}_{\ell^2}^{-\gamma\beta}C_p\pr{\beta}
   }^{\frac{p}{p+1}} .
\end{equation}
The combination of \eqref{eq:bound_Delta_n}, \eqref{eq:I}, \eqref{eq:II} 
and \eqref{eq:III} gives \eqref{eq:Berry_Esseen_weighted_sums}.

 \subsection{Proof of Theorem~\ref{thm:regression}}

Since the random variables $X_{\gr{i}}$ are 
centered, we derive by definition of $g_n\pr{\gr{x}}$ that 
\begin{equation}\label{eq:reecriture_de_g_n}
 \pr{nh_n}^{d/2}\pr{g_n(\gr{x})-\E{g_n(\gr{x})}}=
 \pr{nh_n}^{d/2}\frac{\sum_{\gr{i}\in\Lambda_n}X_{\gr{i}} K\pr{\frac{\gr{x}-\gr{i}/n}{h_n}}}
 {\sum_{\gr{i}\in\Lambda_n} K\pr{\frac{\gr{x}-\gr{i}/n}{h_n}}}.
\end{equation}
We define 
\begin{equation}
 b_{n,\gr{i}}=K\pr{\frac 1{h_n}\pr{\gr{x}-\frac{\gr{i}}{n}}}, \quad\gr{i}
 \in \Lambda_n
\end{equation}
and $b_{n,\gr{i}}=0$ otherwise. Denote $b_n=\pr{b_{n,\gr{i}}}_{\gr{i}
\in\Z^d}$ and $\norm{b_n}_{\ell^2}
:=\pr{\sum_{\gr{i}\in \Z^d}b_{n,\gr{i}}^2  }^{1/2}$. In this way, 
by \eqref{eq:reecriture_de_g_n} and \eqref{eq:definition_de_An},  
\begin{equation}
 \frac 1{\norm{K}_{\mathbb L^2\pr{\R^d}} \sigma}
 \pr{nh_n}^{d/2}\pr{g_n(\gr{x})-\E{g_n(\gr{x})}}=\frac 1{\sigma}
 \sum_{\gr{i}\in \Z^d}b_{n,\gr{i}}X_{\gr{i}} 
 \norm{b_n}_{\ell^2}^{-1}A_n.
\end{equation}
Applying \ref{itm:outil_2} to $Z=\sum_{\gr{i}\in \Z^d}b_{n,\gr{i}}X_{\gr{i}} 
 \norm{b_n}_{\ell^2}^{-1}$ and 
 $Z'=\sum_{\gr{i}\in \Z^d}b_{n,\gr{i}}X_{\gr{i}} 
 \norm{b_n}_{\ell^2}^{-1}\sigma^{-1}\pr{A_n-1}$ and using 
 Theorem~\ref{thm:moment_inequality_dependence_coefficients}, we derive 
 that 
 \begin{equation}
  \widetilde{\Delta_n}\leq c_p\Delta'_n+c_p \pr{\sigma^{-1}
    C_2\pr{\alpha}+C_p\pr{\beta}    }^{\frac{p}{p+1}}
   \abs{A_n-1}^{\frac{p}{p+1}},
 \end{equation}
where 
\begin{equation}
 \Delta'_n=\sup_{t\in \R}\abs{\mathbb P\ens{
Z \leq t}
 -\Phi\pr{\frac{t}{\sigma }  }}.
\end{equation}
We then use Theorem~\ref{thm:vitesse_de_convergence_sommes_ponderees}
to handle $\Delta'_n$ (which is allowed, by \ref{itm:assumption1}).
Using boundedness of $K$, we control the $\ell^p$ and $\ell^{p'}$ norms
by a constant times the $\ell^2$-norm. This ends the proof of 
Theorem~\ref{thm:regression}.

\textbf{Acknowledgments} 
This research was supported by the grand SFB 823.

The author would like to thank the referees 
for many suggestions which improved the presentation of the paper.
 
\def\polhk\#1{\setbox0=\hbox{\#1}{{\o}oalign{\hidewidth
  \lower1.5ex\hbox{`}\hidewidth\crcr\unhbox0}}}\def\cprime{$'$}
  \def\polhk#1{\setbox0=\hbox{#1}{\ooalign{\hidewidth
  \lower1.5ex\hbox{`}\hidewidth\crcr\unhbox0}}} \def\cprime{$'$}
\providecommand{\bysame}{\leavevmode\hbox to3em{\hrulefill}\thinspace}
\providecommand{\MR}{\relax\ifhmode\unskip\space\fi MR }
\providecommand{\MRhref}[2]{%
  \href{http://www.ams.org/mathscinet-getitem?mr=#1}{#2}
}
\providecommand{\href}[2]{#2}

\end{document}